\documentclass[12pt]{amsart}
\oddsidemargin=0in \evensidemargin=0in 
\textwidth=6.6in \textheight=8.7in
\pdfoutput=1
\linespread{1.4}

\usepackage{setspace}
\usepackage[margin=1in,marginparwidth=0.8in, marginparsep=0.1in]{geometry}

\usepackage[bookmarks=true, bookmarksopen=true,%
bookmarksdepth=3,bookmarksopenlevel=2,%
colorlinks=true,%
linkcolor=blue,%
citecolor=blue,%
filecolor=blue,%
menucolor=blue,%
urlcolor=blue]{hyperref}

\usepackage[dvipsnames]{xcolor}
\usepackage{graphicx}

\usepackage{tikz, tikz-3dplot}
\usepackage{caption, subcaption}
\usepackage{amssymb}
\usepackage[all]{xy}
\usepackage{mathtools}
\usepackage{comment}


\newcommand{\bC}{\mathbb{C}}

\newcommand{\bN}{\mathbb{N}}

\newcommand{\bR}{\mathbb{R}}
\newcommand{\bZ}{\mathbb{Z}}

\newcommand{\cE}{\mathcal{E}}

\newcommand{\cH}{\mathcal{H}}

\newcommand{\cT}{\mathcal{T}}



\numberwithin{equation}{subsection}
\numberwithin{figure}{subsection}

	\newcommand{\e}{\mathrm{exp}}

	\newcommand{\x}{\mathbf{x}}
	\newcommand{\y}{\mathbf{y}}
	
	\newcommand{\Ad}{\mathrm{Ad}}

	\newcommand{\Sk}{\mathrm{Sk}}

	\newcommand{\skein}[1]{P_{#1}}

\newtheorem{dummy}{dummy}[section]
\newtheorem{lemma}[dummy]{Lemma}
\newtheorem{theorem}[dummy]{Theorem}
\newtheorem*{untheorem}{Main Theorem}

\newtheorem{corollary}[dummy]{Corollary}
\newtheorem{proposition}[dummy]{Proposition}

\theoremstyle{definition}
\newtheorem{definition}[dummy]{Definition}

\newtheorem{example}[dummy]{Example}
\newtheorem{remark}[dummy]{Remark}

\newtheorem{notation}[dummy]{Notation}

\title{A proof of the pentagon relation for skeins}

\author{Mingyuan Hu
\\
{\tiny Department of Mathematics, Northwestern University} }

\begin{document}
	
	\begin{abstract}
	In \cite{HSZ23}, with Gus Schrader and Eric Zaslow we developed a skein-theoretic version of cluster theory, and made a conjecture on the pentagon relation for the skein dilogarithm. Here we give a topological proof of this conjecture. Combining \cite{MS21} and \cite{BCMN23}, we get a surjection from the skein algebra $\Sk^+(T - D)$ to the positive part of the elliptic Hall algebra $\cE_{q, t}^+$. Hence our pentagon relation generalizes the ones in \cite{Z23} and \cite{GM19}. 
	\end{abstract}

    \maketitle
    
	
	\section{introduction}

The pentagon relation arises in many areas of mathematics and physics. 
For a family of operators $\{Q\}$, it takes the form:
\[
	Q \cdot Q = Q \cdot Q  \cdot Q
\]
Morally, it reflects some coherence in the system.  A prominent example is the quantum dilogarithms \cite{FK94}:
\begin{equation}\label{eq: pentagon for quantum dilog}
 \Phi(\hat{V}) \cdot \Phi(\hat{U}) = \Phi(\hat{U}) \cdot \Phi( q^{-1/2} \hat{V} \hat{U}) \cdot \Phi(\hat{V}), 
\end{equation}
where $\hat{V} \hat{U} = q \hat{U} \hat{V}$, and $\Phi (X) = \e \left(\sum_{k=1}^{\infty} \frac{(-1)^{k+1} X^k}{k (q^{k/2} - q^{-k/2})} \right) = \prod_{n=0}^{\infty} (1 + q^{\frac{1}{2} +n}X)$. One way to understand \eqref{eq: pentagon for quantum dilog} is that it reflects where we land if we want to commute $\Phi(\hat{U}) $ and $\Phi (\hat{V})$. 

 In \cite{GM19}, the authors obtained a pentagon relation in terms of Macdonald polynomials, generalizing \eqref{eq: pentagon for quantum dilog}. This pentagon relation is shown in \cite{Z23} to be equivalent to a pentagon relation in the elliptic Hall (or the Ding-Iohara-Miki) algebra $\cE_{q,t}$, utilizing the results in \cite{AFS} and \cite{SV13}.

The connection between skein modules and representation theory has been studied for years. 
In \cite{MS21}, a surjection from the skein algebra of a punctured torus $\Sk(T-D)$ to $\cE_{q, t}$ was conjectured:
\begin{equation}
	\label{eq: sk to eha 0}
	\Sk(T-D) \twoheadrightarrow \cE_{q, t}.
\end{equation}
Combining \cite[Theorem 5.7]{MS21} and \cite[Theorem 5.10]{BCMN23}, we obtain such a map defined on a subalgebra $Sk^+(T-D)$. (See also \cite{GL23}, where this map is defined on a larger subalgebra.) 
As is pointed out in \cite[Remark 5.8]{MS21},  $\Sk(T - D)$ is much ``larger'' than $\cE_{q, t}$. Thus, it is natural to hope for a pentagon relation in $\Sk(T-D)$.

The pentagon relation also plays an important role in cluster theory. 
In a recent paper \cite{HSZ23} with Gus Schrader and Eric Zaslow, we developed a skein-theoretic version of cluster theory, and used it to compute the generating functions of Open Gromov-Witten invariants defined in \cite{ES19} (also \cite{SS23}).  One of the key constructions is to define a ``skein dilogarithm''  $Q_\x$ (or the ``Baxter operator'' in \cite{HSZ23},  see Definition \ref{def: skein dilog}), whose rank $1$ reduction recovers the usual quantum dilogarithm. 
In this perspective, a pentagon relation for skeins is also desirable. 

\begin{untheorem}[Theorem \ref{thm: pentagon relation}]
	Let $\x$ and $\y$ be the $(1, 0)$ and $(0, 1)$ curves on the punctured torus $T-D$. Then in $\widehat{\Sk} (T- D)$ we have:
	\begin{equation}
		Q_\x \cdot Q_\y = Q_\y \cdot Q_{\x + \y} \cdot Q_{\x}. 
	\end{equation}
\end{untheorem}

Since on a general $2$-dimensional surface, the tubular neighborhood of any two simple closed curves is isomorphic to $T - D$, we get a pentagon relation for the skein algebra of any surface (Corollary  \ref{cor: pentagon for any surf}).

\begin{remark}
	While this paper was being written, the author was informed that this result was also proved in \cite{N24} independently. 	
\end{remark}

\subsection*{Acknowledgements}
The author would like to thank Eric Zaslow and Gus Schrader for helpful discussions and valuable suggestions on an early draft of this paper,  Peter Samuelson for sharing references, and the referee for carefully reading the paper and giving comments.

\section{HOMFLYPT Skein Modules and pentagon relation}
In this section we first recall some properties about skein modules, and then state our main theorem.  
\subsection{HOMFLYPT Skein Modules} \label{subsec: skein module}

The HOMFLYPT skein module $\Sk(M)$ of an oriented three-dimensional manifold (possibly with boundary) $M$ is generated
by $R$-linear combinations of framed oriented links in $M$, up to isotopy,
modulo the relations

	\[
	\begin{tikzpicture}[scale=0.8]

		\node (x) at (0,0){};
		\draw[thick,->] (x.45)-- (.5,.5);
		\draw[thick,->] (x.135) -- (-.5,.5);
		\draw[thick] (x.315) -- (.5,-.5);
		\draw[thick] (x.45) -- (-.5,-.5);
		
		\node at (2,0) {$-$};
		
		\begin{scope}[xshift=4cm]
			\node (y) at (0,0){};
			\draw[thick,->] (y.45)-- (.5,.5);
			\draw[thick,->] (y.135) -- (-.5,.5);
			\draw[thick] (y.225) -- (-.5,-.5);
			\draw[thick] (y.135) -- (.5,-.5);
			
			
			\node at (2,0) {$=$};

		\end{scope}
		
		\begin{scope}[xshift=8cm]
			\node at (-.5,0) {$z$};
			\draw[thick, ->] (0,-.5) arc (-70:70:0.5);
			\draw[thick, ->] (1.2,-.5) arc (180+70:180-70:0.5);
		\end{scope}
	\end{tikzpicture}
	\]
	\phantom{a}
	\[
	\begin{tikzpicture}
		
		\node (x) at (0,0){};
		\draw[thick] (x.45) to[out=45,in=180] (.3,.2);
		\draw[thick] (.3,.2) to[out=0,in=0] (.3,-.2);
		\draw[thick] (.3,-.2) to[out=180,in=315] (x.315);
		\draw[thick] (x.45) -- (-.5,-.5);
		\draw[thick,->] (x.135) -- (-.5,.5);
		\node at (1.25,0) {$=$};
		\node at (2.15,0) {$a$};
		\draw[thick,->] (2.65,-.5)--(2.65,.5);
		\begin{scope}[xshift = 5.5cm]
			\node (y) at (0,0){};
			\draw[thick] (y.45) to[out=45,in=180] (.3,.2);
			\draw[thick] (.3,.2) to[out=0,in=0] (.3,-.2);
			\draw[thick] (.3,-.2) to[out=180,in=315] (y.315);
			\draw[thick,->] (y.315) -- (-.5,.5);
			\draw[thick] (y.225) -- (-.5,-.5);
			\node at (1.25,0) {$=$};
			\node at (2.15,0) {$a^{-1}$};
			\draw[thick,->] (2.75,-.5)--(2.75,.5);
			
		\end{scope}
		
		\begin{scope}[xshift=11cm]
			\draw[thick] (0,0) circle (.5);
			\node at (1.25,0) {$=$};
			\node at (2.5,0) {\Large $\frac{a-a^{-1}}{z}$};
		\end{scope}
		
	\end{tikzpicture}
	\]
	where $z = q^{1/2} - q^{-1/2}$
	and $R = \bC[q^{\pm 1/2},a^{\pm 1},(q^{k/2}-q^{-k/2})^{-1}]$ ,  $k$ runs over $\bN$. The above pictures are local models, and we take the convention that the framing is pointing inward.    
	\begin{example}
		It is well known that 
		\[
		\Sk(\bR^3) \simeq R \langle [\emptyset] \rangle. 
		\]
		The corresponding polynomial of a link under this identification is called the \emph{HOMFLYPT polynomial}. 
	\end{example}

	The skein algebra $\Sk(\Sigma)$ of an oriented surface $\Sigma$ is $\Sk(\Sigma\times I)$
	endowed with the product $l_1 \cdot l_2$ defined by placing the link $l_1$ above the link $l_2$:
	embed $(0,1)\sqcup (0,1) \hookrightarrow (0,2)$ then rescale $(0,2) \cong (0,1)=:I.$

	For a $3$-manifold $M$ with boundary $\partial M =  \Sigma$, the skein algebra $\Sk (\Sigma) $ has a natural left action on the skein module $\Sk(M)$, given by gluing $\Sigma \times I$ to the boundary of $M$ along $\Sigma \times \{0\}$. 
	
	\begin{theorem}[\cite{Przy98}]
		For a surface $\Sigma$, Poincare duality gives a pairing on $H_1(M, \bZ)$. We can consider the induced quantum torus $\cT_{H_1(M,\bZ)}^q$. There is a surjective map:
		\begin{equation}\label{mor: skein to quantum torus}
			\Sk(\Sigma) \twoheadrightarrow \cT_{H_1(M,\bZ)}^q
		\end{equation} 
	\end{theorem}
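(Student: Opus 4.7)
The plan is to construct $\phi: \Sk(\Sigma) \twoheadrightarrow \cT^q_{H_1(\Sigma,\bZ)}$ diagrammatically. Every framed oriented link in $\Sigma \times I$ is presented by a diagram $D \subset \Sigma$ with transverse crossings; define
\[
\phi([D]) \;=\; q^{w(D)/2}\, X_{[D]},
\]
where $[D] \in H_1(\Sigma,\bZ)$ is the homology class of the underlying $1$-cycle and $w(D)$ is the signed count of crossings in $D$. The generators $X_\gamma$ of the quantum torus satisfy $X_\gamma X_\delta = q^{\langle \gamma,\delta\rangle/2} X_{\gamma+\delta}$ in Weyl-ordered form, so the target algebra is indeed $\cT^q_{H_1(\Sigma,\bZ)}$.

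The central check is that $\phi$ respects the HOMFLYPT relation $L_+ - L_- = z L_0$. Since the three local diagrams share the same homology class, and flipping a single crossing changes $w$ by $\pm 1$, one computes
\[
\phi(L_+) - \phi(L_-) \;=\; (q^{1/2} - q^{-1/2})\, X_{[L_0]} \;=\; z\, \phi(L_0),
\]
which is exactly the required identity. A parallel $a$-correction factor handles Reidemeister~I and the framing relations, and the trivial loop in a disk then evaluates to $(a-a^{-1})/z$ from the skein relation itself. Multiplicativity of $\phi$ then follows: stacking simple closed curves $\gamma_1$ above $\gamma_2$ introduces $|\langle [\gamma_1],[\gamma_2]\rangle|$ signed crossings whose resolution yields $q^{\langle[\gamma_1],[\gamma_2]\rangle/2} X_{[\gamma_1]+[\gamma_2]}$, matching the quantum-torus product exactly.

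Surjectivity is then immediate, since every primitive class $\gamma \in H_1(\Sigma,\bZ)$ is realized by a simple closed curve whose image under $\phi$ is the generator $X_\gamma$, and arbitrary classes arise from parallel copies. The main obstacle is showing that $w(D)$ is well-defined on isotopy classes of framed links, i.e., that the signed crossing count is preserved under Reidemeister~II and III and transforms correctly under Reidemeister~I via the $a$-factor. This combinatorial bookkeeping is the technical heart of the argument; once $w$ is established as an isotopy invariant, the computations above show that $\phi$ descends to the skein quotient and yields the desired surjection.
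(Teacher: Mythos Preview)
The paper does not give its own proof of this theorem; it is simply cited from \cite{Przy98}. So there is no argument in the paper to compare your proposal against.

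Your approach is the standard one and the skein-relation check is correct: with $\phi([D]) = q^{w(D)/2} X_{[D]}$, the identity $\phi(L_+)-\phi(L_-) = (q^{1/2}-q^{-1/2})\,\phi(L_0)$ holds exactly as you wrote, and surjectivity via simple closed curves representing primitive homology classes is fine.

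The genuine gap is your treatment of the framing parameter $a$. The sentence ``a parallel $a$-correction factor handles Reidemeister~I and the framing relations'' hides a real obstruction rather than resolving it. Adding a positive kink increases $w$ by $1$, so under your formula $\phi(L_{\text{kink}}) = q^{1/2}\phi(L)$; but the HOMFLYPT framing relation demands $\phi(L_{\text{kink}}) = a\,\phi(L)$. These are compatible only if $a = q^{1/2}$. Equivalently, your map sends the trivial unknot (writhe $0$, trivial homology) to $X_0 = 1$, whereas the skein module forces its value to be $(a-a^{-1})/z$; these agree only under the same specialization. No multiplicative correction of the form $c(w)\,X_{[D]}$ can satisfy both the crossing relation and the framing relation for generic $a$: the two constraints together pin down the recursion $c(w+1)-c(w-1)=z\,c(w)$ and $c(w+1)=a\,c(w)$, whose only common solutions require $a \in \{q^{1/2},-q^{-1/2}\}$.

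So to make your construction an honest algebra map you must either (i) declare that the homomorphism is not $R$-linear and sends $a \mapsto q^{1/2}$, or (ii) enlarge the target by a central framing variable. Either is acceptable, but you need to say which one you mean and verify the remaining relations under that choice; as written, the proof does not go through over the full coefficient ring $R$.
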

	Morally, the skein algebra can be considered as a ``higher rank'' version of the quantum torus. \cite{GS}
	
	\begin{example}[The skein module of the solid torus]
		\label{eg: sk of solid t} Consider the skein algebra of an annulus. As a module it will be isomorphic to the skein module of a solid torus. By results in \cite{HM06, MM08, AM98}, after choosing an orientation of \(S^1\), this skein module decomposes as:
		\begin{equation}
		\Sk(S^1 \times D) \simeq \Sk^+(S^1 \times D) \otimes_R \Sk^-(S^1 \times D).
		\end{equation}
		where \(\Sk^+(S^1 \times D)\) (resp. \(\Sk^-(S^1 \times D)\))  consists of links winding in the positive (resp. negative) direction, i.e., the tangent direction of those links, after projecting to \(S^1\), has to be positive (resp. negative) everywhere. See figure \ref{figure: solid torus} for an illustration. 
        Both \(\Sk^+(S^1 \times D)\) and $\Sk^-(S^1 \times D)$ are isomorphic to the $R$-algebra of symmetric functions with infinitely many variables: 
		\begin{equation}		\label{eq: skein of annulus}
			\Sk^+(S^1 \times D) \simeq \Sk^-(S^1 \times D) \simeq R [ x_1, x_2, x_3, \cdots]^{\text{Sym}}. 
		\end{equation}  
	\end{example}
	
	\begin{figure}[htbp]
		\begin{tikzpicture}
			\draw[thick, red] (0,0) -- (3,0) ;
			\draw[thick] (0, 0) -- (0, 3);
			\draw[thick, red] (0, 3) -- (3,3);
			\draw[thick] (3,0) -- (3,3);
			\draw[thick, blue] (1.2, 0) .. controls ( 1.2, 1) and (1.2, 1).. (.9, 1.3);
			\draw[blue, thick, ->] (.4, 0) .. controls (.4, 1.8) and (1.2, 1.2) .. (1.2, 3);
			\draw[blue, thick, ->]  (.65, 1.52) .. controls (.4, 1.8) and (.4, 1.8).. (.4, 3);
			
			\draw[green, thick, ->] (1.8, 3) .. controls (1.8, 1.5) and (2.6, 1.5) .. (2.6, 0);
			\draw[green, thick, ] (2.6, 3).. controls (2.6, 1.9) .. (2.25, 1.7);
			\draw[green, thick, ->] (2.0, 1.6).. controls (1.8, 1.5) .. (1.8, 0);
		\end{tikzpicture}
		
		\caption{A skein element in \(\Sk(S^1 \times D^2)\). The red edges are identified so we get a link in a solid torus. We take the convention that going upward is the positive direction. Hence, the blue part lives in $\Sk^+(S^1 \times D)$, while the green part lives in \(\Sk^-(S^1 \times D)\). }
 		\label{figure: solid torus}
	\end{figure}
	
	\begin{notation}\label{notation: nl}

	For an oriented framed knot $l$ in a $3$-manifold $M$, we use $P_l$ to denote the element it represents in $\Sk(M)$. Note that the tubular neighborhood of \(l\) is homeomorphic to a solid torus, and that a framing is just a choice of the longitude, which will give an identification of the solid torus with \(S^1 \times D^2\) (up to isotopy). 
    Thus we have a natural $R$-module morphism induced by $l$:
	\[
	\Sk^+(S^1 \times D) \longrightarrow \Sk(M). 
	\]
	We denote by $P_{nl}$ the image of the skein element corresponding to the symmetric polynomial $p_n = \sum_i x_i^n \in R [ x_1, x_2, x_3, \cdots]^{\text{Sym}}$ under \eqref{eq: skein of annulus}. 
			
	\end{notation}
	
	\begin{example}[\cite{MS17} The skein algebra of the torus]
		\label{eg: sk of T}
		Let $T = \bR^2 / \bZ^2$ be the torus. The skein algebra $\Sk(T )$ is isomorphic to the $R$ algebra generated by $\{P_\x\}, \x \in \bZ^2$ modulo the relation:
		\begin{equation}
		[P_\x, P_\y]  = \{d\} P_{\x + \y},
		\label{eq: relation of skein of the torus}
		\end{equation}
		where $d = \det(\x|\y)$ and $\{d\} = q^{d/2} - q^{-d/2}$. 
		For a primitive $\x \in \bZ^2$, $P_\x$ is defined to be the torus knot with slope $\x$, and $P_{k\x}$ is defined as as in Notation \ref{notation: nl}.  In particular, $\Sk(T )$ is isomorphic to the $q = t$ specialized elliptic Hall algebra $\cE_{q, q}$. See \cite{MS17} for details.
	\end{example}
	
	\begin{example}[The skein algebra of the punctured torus]\label{eg: skein of punctured torus}
		Consider then skein algebra of a torus minus a disk $\Sk(T - D)$. The elements $P_\x, \ \x \in \bZ^2$, are still defined. However,  the relation \eqref{eq: relation of skein of the torus} only holds when $\x = k \x_0$, and $\det(\x_0 | \y) =\pm 1$. See \cite[Remark 4.3]{MS21}. 
	\end{example}
	
	Define $\mathbf{Z}^+ \coloneqq \{ (a, b) \in \bZ^2 |\ a>0 \ \text{or}\  a=0, b\ge 0\}$. Let $\Sk^+(T-D)$ be the subalgebra of $\Sk(T - D)$ generated by $P_\x$ for $\x \in \mathbf{Z}^+$.  
	
	\begin{theorem}[\cite{MS21, BCMN23}]
		There is a surjective $R$-algebra map  from $\Sk^+(T  - D) $ to the positive part of the elliptic hall algebra:
		\begin{equation}
			\label{mor: skein to eha}
			\Sk^+(T  - D) \twoheadrightarrow \cE^+_{q,t}
		\end{equation}
	\end{theorem}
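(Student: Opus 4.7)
The plan is to construct this surjection by combining the two results cited just before the statement. I would begin with \cite[Theorem 5.7]{MS21}, which builds an explicit algebra homomorphism from a subalgebra of $\Sk(T-D)$ to the elliptic Hall algebra $\cE_{q,t}$ by sending the torus knots $P_{(m,n)}$ to natural generators of $\cE_{q,t}$ indexed by lattice vectors. A key input is the identification of $\Sk^+(S^1\times D)$ with the ring of symmetric functions recalled in Example \ref{eg: sk of solid t}: this translates the cabling and power-sum structure on torus knots of a fixed slope into the Heisenberg-type subalgebras generated by each primitive ray of $\cE_{q,t}$.

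Next I would invoke \cite[Theorem 5.10]{BCMN23} to extend the Morton--Samuelson construction to the full positive subalgebra $\Sk^+(T-D)$ generated by $P_\x$ with $\x \in \mathbf{Z}^+$. Concretely, for each pair $\x, \y \in \mathbf{Z}^+$ one checks that the skein relations described in Example \ref{eg: skein of punctured torus} match the structure constants of the Burban--Schiffmann presentation of $\cE^+_{q,t}$. Because $\cE^+_{q,t}$ is generated by the images of the $P_\x$ with $\x \in \mathbf{Z}^+$, the resulting map is automatically surjective once it is shown to be well defined.

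The main obstacle is compatibility of the more complicated skein relations in $\Sk(T-D)$, which (per \cite[Remark 4.3]{MS21}) fail to take the clean form \eqref{eq: relation of skein of the torus} when $\det(\x|\y) \neq \pm 1$, with the relations of $\cE^+_{q,t}$. Restricting to the positive cone $\mathbf{Z}^+$ keeps us in a range where the higher commutators among the $P_\x$ can be identified with the Burban--Schiffmann structure constants, and the image in $\cE_{q,t}$ automatically lands in the positive part $\cE^+_{q,t}$. Organizing this bookkeeping is precisely where the interplay between \cite{MS21} and \cite{BCMN23} is essential: MS21 provides the map on each Heisenberg ray, while BCMN23 verifies that the cross-ray commutators are consistent with the elliptic Hall relations.
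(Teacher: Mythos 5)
The paper does not actually prove this statement; it records it as a cited result (from \cite[Theorem 5.7]{MS21} and \cite[Theorem 5.10]{BCMN23}) and simply writes down the map $P_{k\x_0}\mapsto \{k\}u_{k\x_0}$ for primitive $\x_0$, deferring all verification to those references. Your proposal takes the same route -- combine the two citations, define the map on generators, and appeal to the fact that $\cE^+_{q,t}$ is generated by $u_\x$ for $\x\in\mathbf{Z}^+$ to get surjectivity -- which is consistent with what the paper does. Two small caveats: first, you omit the normalization factor $\{k\} = q^{k/2}-q^{-k/2}$ in the formula for the map (it is harmless for surjectivity since $\{k\}$ is a unit in $R$, but it is part of the definition); second, your specific attribution of tasks to MS21 versus BCMN23 (MS21 building the map on a smaller piece, BCMN23 extending across slopes) is speculative, and the paper itself does not spell out what each theorem contributes, so that part of your account should be checked directly against \cite[Thm.\ 5.7]{MS21} and \cite[Thm.\ 5.10]{BCMN23} before being asserted as the actual division of labor.
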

	Recall that the elliptic Hall algebra $\cE_{q, t}$ is a $\bC(q, t)$-algebra generated by ${u_\x}, \x \in \bZ^2$, modulo some relations. $\cE_{q, t}^+$ is the subalgebra generated by $u_\x$ for $\x \in \mathbf{Z}^+$. The map \eqref{mor: skein to eha} is defined by:
	\[
	P_{k \x_0} \longmapsto \{k\}u_{k \x_0} 
	\] 
	for primitive $\x_0$. See \cite[Theorem 5.7]{MS21} for details.

	\subsection{Skein Dilogarithms and the Pentagon Relation}\label{subsec: sk dilog}
	
	We first recall the definition of the completion of skein modules in \cite{HSZ23}. 
	
	The skein module $\Sk (M)$ is graded
	by the group $H_1 (M; \mathbb{Z})$. Write $\Sk(M)_\lambda$ for the $\lambda$-graded piece,  $\lambda \in H_1(M, \bZ)$.   If we choose a homomorphism 
	\begin{equation}\label{eq: completion morphism}
	\delta: \quad H_1 (M; \bZ) \longrightarrow \bZ
	\end{equation}
	then  \( \left\{\bigoplus_{ \delta (\lambda) \le n}  \Sk (M)_\lambda \right\}_{n\in \bN} \) gives a filtration of \(\Sk(M)\). Therefore we can define the completion to be:
	\[
	\widehat{\Sk} (M) \coloneqq \varprojlim_{n\ge 0} \left( \bigoplus_{ \delta (\lambda) \le n}  \Sk (M)_\lambda  \right).
	\]

	For a surface $\Sigma$, the completion $\widehat{\Sk}(\Sigma) \coloneqq
	\widehat{\Sk}(\Sigma \times I)$ retains the structure of an algebra.  Moreover, if $\partial M = \Sigma$ and we choose compatible homomorphisms to $\bZ$,
	i.e.~commuting with the natural map $H_1(\Sigma; \bZ) \rightarrow H_1 (M; \bZ)$, then $\widehat{\Sk}(M)$ inherits
	a module structure over $\widehat {\Sk} (\Sigma)$.

    \begin{example}	\label{eg: completion}

        \begin{figure}[ht]
			\centering

		\tikzset{every picture/.style={line width=0.75pt}} 
		\begin{tikzpicture}[x=0.75pt,y=0.75pt,yscale=-.6,xscale=.6]
			
			\draw    (428.2,58.5) .. controls (449.2,49.5) and (488.2,43.5) .. (508.2,53.5) .. controls (528.2,63.5) and (549.2,93.5) .. (545.2,121.5) .. controls (541.2,149.5) and (512.21,157.21) .. (481.2,161.5) .. controls (450.19,165.79) and (420.74,153.56) .. (380.2,150.5) .. controls (339.66,147.44) and (317.61,152.9) .. (310.27,153.99) .. controls (302.93,155.08) and (269.42,163.15) .. (240.2,165.5) .. controls (210.98,167.85) and (186.2,165.5) .. (166.2,157.5) .. controls (146.2,149.5) and (134.2,128.5) .. (134.2,108.5) .. controls (134.2,88.5) and (135.2,75.5) .. (150.2,60.5) .. controls (165.2,45.5) and (204.41,42.85) .. (245.2,45.5) .. controls (285.99,48.15) and (315.2,61.5) .. (349.2,64.5) .. controls (383.2,67.5) and (415.2,65.5) .. (428.2,58.5) -- cycle ;
			\draw    (168.2,104.5) .. controls (177.8,118.31) and (187.9,121.1) .. (202.25,118.84) .. controls (216.61,116.58) and (223.6,108.69) .. (229.2,101.5) ;
			\draw    (179.2,109.5) .. controls (199.2,96.5) and (208.4,97) .. (219.2,105.5) ;
			\draw    (270.2,103.5) .. controls (279.8,117.31) and (289.9,120.1) .. (304.25,117.84) .. controls (318.61,115.58) and (325.6,107.69) .. (331.2,100.5) ;
			\draw    (281.2,108.5) .. controls (301.2,95.5) and (310.4,96) .. (321.2,104.5) ;
			\draw    (442.2,104.5) .. controls (451.8,118.31) and (461.9,121.1) .. (476.25,118.84) .. controls (490.61,116.58) and (497.6,108.69) .. (503.2,101.5) ;
			\draw    (453.2,109.5) .. controls (473.2,96.5) and (482.4,97) .. (493.2,105.5) ;
			\draw  [draw opacity=0] (198.72,83.48) .. controls (199.17,83.47) and (199.62,83.47) .. (200.07,83.47) .. controls (222.56,83.47) and (240.8,94.17) .. (240.8,107.37) .. controls (240.8,120.57) and (222.56,131.27) .. (200.07,131.27) .. controls (177.57,131.27) and (159.33,120.57) .. (159.33,107.37) .. controls (159.33,103.03) and (161.3,98.97) .. (164.73,95.47) -- (200.07,107.37) -- cycle ; \draw [color={rgb, 255:red, 208; green, 2; blue, 27 }  ,draw opacity=1 ]   (200.07,83.47) .. controls (222.56,83.47) and (240.8,94.17) .. (240.8,107.37) .. controls (240.8,120.57) and (222.56,131.27) .. (200.07,131.27) .. controls (177.57,131.27) and (159.33,120.57) .. (159.33,107.37) .. controls (159.33,103.03) and (161.3,98.97) .. (164.73,95.47) ;  \draw [shift={(198.72,83.48)}, rotate = 3.72] [fill={rgb, 255:red, 208; green, 2; blue, 27 }  ,fill opacity=1 ][line width=0.08]  [draw opacity=0] (10.72,-5.15) -- (0,0) -- (10.72,5.15) -- (7.12,0) -- cycle    ;
			\draw  [draw opacity=0] (238.74,99.84) .. controls (240.08,102.2) and (240.8,104.74) .. (240.8,107.37) .. controls (240.8,120.57) and (222.56,131.27) .. (200.07,131.27) .. controls (177.57,131.27) and (159.33,120.57) .. (159.33,107.37) .. controls (159.33,94.17) and (177.57,83.47) .. (200.07,83.47) .. controls (209.45,83.47) and (218.1,85.33) .. (224.99,88.46) -- (200.07,107.37) -- cycle ; \draw [color={rgb, 255:red, 208; green, 2; blue, 27 }  ,draw opacity=1 ]   (238.74,99.84) .. controls (240.08,102.2) and (240.8,104.74) .. (240.8,107.37) .. controls (240.8,120.57) and (222.56,131.27) .. (200.07,131.27) .. controls (177.57,131.27) and (159.33,120.57) .. (159.33,107.37) .. controls (159.33,94.17) and (177.57,83.47) .. (200.07,83.47) .. controls (209.45,83.47) and (218.1,85.33) .. (224.99,88.46) ;  
			\draw  [draw opacity=0] (299.39,82.15) .. controls (299.83,82.14) and (300.28,82.13) .. (300.73,82.13) .. controls (323.23,82.13) and (341.47,92.83) .. (341.47,106.03) .. controls (341.47,119.23) and (323.23,129.93) .. (300.73,129.93) .. controls (278.24,129.93) and (260,119.23) .. (260,106.03) .. controls (260,101.7) and (261.96,97.64) .. (265.4,94.14) -- (300.73,106.03) -- cycle ; \draw [color={rgb, 255:red, 208; green, 2; blue, 27 }  ,draw opacity=1 ]   (300.73,82.13) .. controls (323.23,82.13) and (341.47,92.83) .. (341.47,106.03) .. controls (341.47,119.23) and (323.23,129.93) .. (300.73,129.93) .. controls (278.24,129.93) and (260,119.23) .. (260,106.03) .. controls (260,101.7) and (261.96,97.64) .. (265.4,94.14) ;  \draw [shift={(299.39,82.15)}, rotate = 3.72] [fill={rgb, 255:red, 208; green, 2; blue, 27 }  ,fill opacity=1 ][line width=0.08]  [draw opacity=0] (10.72,-5.15) -- (0,0) -- (10.72,5.15) -- (7.12,0) -- cycle    ;
			\draw  [draw opacity=0] (339.4,98.5) .. controls (340.74,100.87) and (341.47,103.4) .. (341.47,106.03) .. controls (341.47,119.23) and (323.23,129.93) .. (300.73,129.93) .. controls (278.24,129.93) and (260,119.23) .. (260,106.03) .. controls (260,92.83) and (278.24,82.13) .. (300.73,82.13) .. controls (310.12,82.13) and (318.76,84) .. (325.65,87.13) -- (300.73,106.03) -- cycle ; \draw [color={rgb, 255:red, 208; green, 2; blue, 27 }  ,draw opacity=1 ]   (339.4,98.5) .. controls (340.74,100.87) and (341.47,103.4) .. (341.47,106.03) .. controls (341.47,119.23) and (323.23,129.93) .. (300.73,129.93) .. controls (278.24,129.93) and (260,119.23) .. (260,106.03) .. controls (260,92.83) and (278.24,82.13) .. (300.73,82.13) .. controls (310.12,82.13) and (318.76,84) .. (325.65,87.13) ;  
			\draw  [draw opacity=0] (471.39,82.81) .. controls (471.83,82.8) and (472.28,82.8) .. (472.73,82.8) .. controls (495.23,82.8) and (513.47,93.5) .. (513.47,106.7) .. controls (513.47,119.9) and (495.23,130.6) .. (472.73,130.6) .. controls (450.24,130.6) and (432,119.9) .. (432,106.7) .. controls (432,102.37) and (433.96,98.31) .. (437.4,94.8) -- (472.73,106.7) -- cycle ; \draw [color={rgb, 255:red, 208; green, 2; blue, 27 }  ,draw opacity=1 ]   (472.73,82.8) .. controls (495.23,82.8) and (513.47,93.5) .. (513.47,106.7) .. controls (513.47,119.9) and (495.23,130.6) .. (472.73,130.6) .. controls (450.24,130.6) and (432,119.9) .. (432,106.7) .. controls (432,102.37) and (433.96,98.31) .. (437.4,94.8) ;  \draw [shift={(471.39,82.81)}, rotate = 3.72] [fill={rgb, 255:red, 208; green, 2; blue, 27 }  ,fill opacity=1 ][line width=0.08]  [draw opacity=0] (10.72,-5.15) -- (0,0) -- (10.72,5.15) -- (7.12,0) -- cycle    ;
			\draw  [draw opacity=0] (511.4,99.17) .. controls (512.74,101.54) and (513.47,104.07) .. (513.47,106.7) .. controls (513.47,119.9) and (495.23,130.6) .. (472.73,130.6) .. controls (450.24,130.6) and (432,119.9) .. (432,106.7) .. controls (432,93.5) and (450.24,82.8) .. (472.73,82.8) .. controls (482.12,82.8) and (490.76,84.66) .. (497.65,87.79) -- (472.73,106.7) -- cycle ; \draw [color={rgb, 255:red, 208; green, 2; blue, 27 }  ,draw opacity=1 ]   (511.4,99.17) .. controls (512.74,101.54) and (513.47,104.07) .. (513.47,106.7) .. controls (513.47,119.9) and (495.23,130.6) .. (472.73,130.6) .. controls (450.24,130.6) and (432,119.9) .. (432,106.7) .. controls (432,93.5) and (450.24,82.8) .. (472.73,82.8) .. controls (482.12,82.8) and (490.76,84.66) .. (497.65,87.79) ;  
			
			\draw (365,95) node [anchor=north west][inner sep=0.75pt]  [font=\Large] [align=left] {$\displaystyle \cdots $};
			\draw (207.6,145.75) node  [color={rgb, 255:red, 208; green, 2; blue, 27 }  ,opacity=1 ] [align=left] {\begin{minipage}[lt]{23.94pt}\setlength\topsep{0pt}
					$\displaystyle a_{1}$
			\end{minipage}};
			\draw (305.27,142.42) node  [color={rgb, 255:red, 208; green, 2; blue, 27 }  ,opacity=1 ] [align=left] {\begin{minipage}[lt]{23.94pt}\setlength\topsep{0pt}
					$\displaystyle a_{2}$
			\end{minipage}};
			\draw (479.6,142.75) node  [color={rgb, 255:red, 208; green, 2; blue, 27 }  ,opacity=1 ] [align=left] {\begin{minipage}[lt]{23.94pt}\setlength\topsep{0pt}
					$\displaystyle a_{g}$
			\end{minipage}};

		\end{tikzpicture}
		
			\caption{Loops $a_i$ on $\Sigma_g$}
			\label{fig: Sigma_g}

		\end{figure}
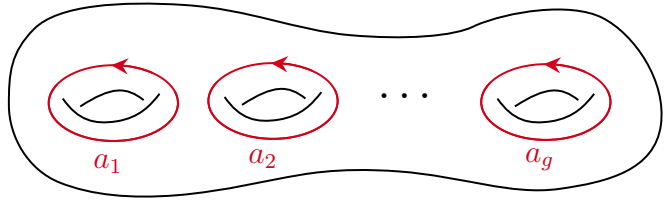

        We will focus on the genus-$g$ handlebody $\cH_g$ with boundary $\Sigma_g$ --- see Figure \ref{fig: Sigma_g}.      
	The $a_i$ circles form a basis for $H_1 (\cH_g, \bZ)$. We then     define the homomorphism for $\cH_g$ to be
		\begin{align*}
			\delta: H_1 (\cH_g; \bZ) &\longrightarrow \bZ \\
			\sum_{i=1}^g c_i [a_i] &\longmapsto \sum_{i=1}^g c_i,
		\end{align*}  
		and for $\Sigma_g$ to be $\delta$ composed with the natural map $H_1(\Sigma_g; \bZ) \rightarrow H_1( \cH_g ; \bZ)$. The corresponding completion $\widehat{\Sk} (\cH_g) $ is a well-defined module over the algebra $\widehat{\Sk} (\Sigma_g)$. 

    \end{example}

	We say a loop is \textit{positive}/\textit{negative} with respect to $\delta$ if the image of its homology class under $\delta$ is positive/negative. 
	
	\begin{definition}[{\cite[Definition 5.8]{HSZ23}}]
		\label{def: skein dilog}
	For a positive loop $l$ on $\Sigma$, we define the \emph{Skein Dilogarithm} to be 
	\begin{equation}
		Q_l (u) \coloneqq \e \left(\sum_{k \ge 0} \frac{(-1)^{k+1} }{  k \{k\}} u^k P_{kl} \right)  \in \widehat{\Sk}(\Sigma),
	\end{equation}
	where $\{k\} = q^{\frac{k}{2} } -q^{-\frac{k}{2}}$, $u$ is an element in $R$, and $P_{nl}$ are defined in Section \ref{subsec: skein module}. 
	We write $Q_l$ for $Q_{l} (1) $ for short. 		
	\end{definition}
	Under the map \eqref{mor: skein to quantum torus}, $Q_l$ is mapped to the quantum dilogarithm $\Phi(X_{[l]}) \in \widehat{\cT}^q_{H_1(\Sigma, \bZ)}$. Here \(X_{[l]}\) is the quantum torus generator associated to the homology class of \(l\). 
    
	\begin{notation}\label{notation: x+l}
	
	Let $\x$ and $l$ be simple closed curves on $\Sigma$, transversely intersecting at a single point. Use $\x + l$ to denote their connected sum.
		
\begin{center}

		\tikzset{every picture/.style={line width=0.75pt}} 
		
		\begin{tikzpicture}[x=0.75pt,y=0.75pt,yscale=-1,xscale=1]
			
			\draw [color={rgb, 255:red, 74; green, 144; blue, 226 }  ,draw opacity=1 ][line width=1.5]    (162.39,113.13) -- (162.39,24.53) ;
			\draw [shift={(162.39,20.53)}, rotate = 90] [fill={rgb, 255:red, 74; green, 144; blue, 226 }  ,fill opacity=1 ][line width=0.08]  [draw opacity=0] (13.4,-6.43) -- (0,0) -- (13.4,6.44) -- (8.9,0) -- cycle    ;
			\draw [color={rgb, 255:red, 189; green, 16; blue, 224 }  ,draw opacity=1 ][line width=1.5]    (273.42,66.89) .. controls (308.52,66.96) and (318.82,67.12) .. (319.67,23.99) ;
			\draw [shift={(319.72,20.59)}, rotate = 90.48] [fill={rgb, 255:red, 189; green, 16; blue, 224 }  ,fill opacity=1 ][line width=0.08]  [draw opacity=0] (13.4,-6.43) -- (0,0) -- (13.4,6.44) -- (8.9,0) -- cycle    ;
			\draw [line width=1.5]    (116.09,41.54) -- (116.09,92.12) ;
			\draw  [draw opacity=0][dash pattern={on 1.69pt off 2.76pt}][line width=1.5]  (140.55,20.53) .. controls (140.55,32.14) and (129.6,41.54) .. (116.09,41.54) -- (116.09,20.53) -- cycle ; \draw  [dash pattern={on 1.69pt off 2.76pt}][line width=1.5]  (140.55,20.53) .. controls (140.55,32.14) and (129.6,41.54) .. (116.09,41.54) ;  
			\draw [line width=1.5]    (184.23,20.53) -- (140.55,20.53) ;
			\draw  [draw opacity=0][dash pattern={on 1.69pt off 2.76pt}][line width=1.5]  (184.23,113.13) .. controls (184.23,101.53) and (195.18,92.12) .. (208.69,92.12) -- (208.69,113.13) -- cycle ; \draw  [dash pattern={on 1.69pt off 2.76pt}][line width=1.5]  (184.23,113.13) .. controls (184.23,101.53) and (195.18,92.12) .. (208.69,92.12) ;  
			\draw  [draw opacity=0][dash pattern={on 1.69pt off 2.76pt}][line width=1.5]  (208.69,41.54) .. controls (195.18,41.54) and (184.23,32.14) .. (184.23,20.53) -- (208.69,20.53) -- cycle ; \draw  [dash pattern={on 1.69pt off 2.76pt}][line width=1.5]  (208.69,41.54) .. controls (195.18,41.54) and (184.23,32.14) .. (184.23,20.53) ;  
			\draw  [draw opacity=0][dash pattern={on 1.69pt off 2.76pt}][line width=1.5]  (116.09,92.12) .. controls (129.6,92.12) and (140.55,101.53) .. (140.55,113.13) -- (116.09,113.13) -- cycle ; \draw  [dash pattern={on 1.69pt off 2.76pt}][line width=1.5]  (116.09,92.12) .. controls (129.6,92.12) and (140.55,101.53) .. (140.55,113.13) ;  
			\draw [line width=1.5]    (208.69,41.54) -- (208.69,92.12) ;
			\draw [line width=1.5]    (184.23,113.13) -- (140.55,113.13) ;
			\draw [line width=1.5]    (273.42,41.6) -- (273.42,92.18) ;
			\draw  [draw opacity=0][dash pattern={on 1.69pt off 2.76pt}][line width=1.5]  (297.88,20.59) .. controls (297.88,32.19) and (286.93,41.6) .. (273.42,41.6) .. controls (273.42,41.6) and (273.42,41.6) .. (273.42,41.6) -- (273.42,20.59) -- cycle ; \draw  [dash pattern={on 1.69pt off 2.76pt}][line width=1.5]  (297.88,20.59) .. controls (297.88,32.19) and (286.93,41.6) .. (273.42,41.6) .. controls (273.42,41.6) and (273.42,41.6) .. (273.42,41.6) ;  
			\draw [line width=1.5]    (341.56,20.59) -- (297.88,20.59) ;
			\draw  [draw opacity=0][dash pattern={on 1.69pt off 2.76pt}][line width=1.5]  (341.56,113.19) .. controls (341.56,101.59) and (352.51,92.18) .. (366.02,92.18) -- (366.02,113.19) -- cycle ; \draw  [dash pattern={on 1.69pt off 2.76pt}][line width=1.5]  (341.56,113.19) .. controls (341.56,101.59) and (352.51,92.18) .. (366.02,92.18) ;  
			\draw  [draw opacity=0][dash pattern={on 1.69pt off 2.76pt}][line width=1.5]  (366.02,41.6) .. controls (352.51,41.6) and (341.56,32.19) .. (341.56,20.59) -- (366.02,20.59) -- cycle ; \draw  [dash pattern={on 1.69pt off 2.76pt}][line width=1.5]  (366.02,41.6) .. controls (352.51,41.6) and (341.56,32.19) .. (341.56,20.59) ;  
			\draw  [draw opacity=0][dash pattern={on 1.69pt off 2.76pt}][line width=1.5]  (273.42,92.18) .. controls (286.93,92.18) and (297.88,101.59) .. (297.88,113.19) -- (273.42,113.19) -- cycle ; \draw  [dash pattern={on 1.69pt off 2.76pt}][line width=1.5]  (273.42,92.18) .. controls (286.93,92.18) and (297.88,101.59) .. (297.88,113.19) ;  
			\draw [line width=1.5]    (366.02,41.6) -- (366.02,92.18) ;
			\draw [line width=1.5]    (341.56,113.19) -- (297.88,113.19) ;
			\draw [color={rgb, 255:red, 189; green, 16; blue, 224 }  ,draw opacity=1 ][line width=1.5]    (319.72,113.19) .. controls (320.64,69.16) and (320.67,67.84) .. (362.06,66.97) ;
			\draw [shift={(366.02,66.89)}, rotate = 178.85] [fill={rgb, 255:red, 189; green, 16; blue, 224 }  ,fill opacity=1 ][line width=0.08]  [draw opacity=0] (13.4,-6.43) -- (0,0) -- (13.4,6.44) -- (8.9,0) -- cycle    ;
			\draw [color={rgb, 255:red, 208; green, 2; blue, 27 }  ,draw opacity=1 ][line width=1.5]    (116.09,66.83) -- (204.69,66.83) ;
			\draw [shift={(208.69,66.83)}, rotate = 180] [fill={rgb, 255:red, 208; green, 2; blue, 27 }  ,fill opacity=1 ][line width=0.08]  [draw opacity=0] (13.4,-6.43) -- (0,0) -- (13.4,6.44) -- (8.9,0) -- cycle    ;
			
			\draw (181.27,70.47) node [anchor=north west][inner sep=0.75pt]  [color={rgb, 255:red, 208; green, 2; blue, 27 }  ,opacity=1 ] [align=left] {$\displaystyle \x$};
			\draw (146.8,33.1) node [anchor=north west][inner sep=0.75pt]  [color={rgb, 255:red, 74; green, 144; blue, 226 }  ,opacity=1 ] [align=left] {$\displaystyle l$};
			\draw (276.67,65.67) node [anchor=north west][inner sep=0.75pt]  [color={rgb, 255:red, 189; green, 16; blue, 224 }  ,opacity=1 ] [align=left] {$\displaystyle \x+l$};
		
		\end{tikzpicture}
		
	\end{center}

\end{notation}

We have:
\begin{proposition}[{\cite[Lemma 5.5]{HSZ23}}]
	\label{prop: Ad action}
	\begin{align*}
		\Ad_{Q_\x} P_l &= P_l + P_{\x+l}, \quad \text{if \( \x \) intersects \( l \) positively, and}\\
		\Ad_{Q_\x^{-1}} P_l& = P_l + P_{\x+l} \quad \text{if \( \x \) intersects \( l \) negatively.}
	\end{align*}
\end{proposition}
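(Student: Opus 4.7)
The plan is to compute $\Ad_{Q_\x} P_l$ by direct expansion of the exponential. First I would localize the problem: since $Q_\x$ and $P_l$ are supported in a tubular neighborhood of $\x \cup l$, and any two simple closed curves meeting transversely in one point have a neighborhood homeomorphic to $T-D$ with $\x, l$ realized as the $(1,0), (0,1)$ curves, it suffices to verify the identity in $\widehat{\Sk}(T-D)$. I focus on the positive-intersection case; the negative case will come out by an identical calculation in which the sign in $[P_{j\x}, P_{k\x+l}] = \{-j\}P_{(j+k)\x+l}$ cancels the sign introduced by passing from $A := \log Q_\x$ to $-A = \log Q_\x^{-1}$.

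Set $A := \log Q_\x = \sum_{k \ge 1} \frac{(-1)^{k+1}}{k\{k\}} P_{k\x}$, so that $\Ad_{Q_\x}(P_l) = \sum_{n \ge 0} \frac{1}{n!}\ad_A^n(P_l)$. The key input is the bracket relation from Example \ref{eg: skein of punctured torus}: since $\x$ is primitive and $\det(\x\,|\,k\x+l) = \det(\x\,|\,l) = 1$ for every $k \ge 0$, we have $[P_{j\x}, P_{k\x+l}] = \{j\}\, P_{(j+k)\x+l}$ for all $j \ge 1, k \ge 0$. Consequently each $\ad_A^n(P_l)$ lies in the $R$-span of $\{P_{m\x+l}\}_{m \ge 0}$; writing $\ad_A^n(P_l) = \sum_{m \ge 0} c_m^{(n)} P_{m\x+l}$, the factor $\{j\}$ from the bracket exactly cancels the $\{j\}$ in the denominators of $A$, yielding the clean recursion
\[
c_m^{(n+1)} = \sum_{j=1}^m \frac{(-1)^{j+1}}{j}\, c_{m-j}^{(n)},\qquad c_m^{(0)} = \delta_{m,0}.
\]

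To sum the series, I would introduce the two-variable generating function $G(u,t) := \sum_{m,n \ge 0} c_m^{(n)} u^m t^n/n!$. The recursion translates into the ODE $\partial_t G = \log(1+u)\cdot G$ with $G(u,0) = 1$, whose solution is $G(u,t) = (1+u)^t$. Setting $t = 1$ gives $G(u,1) = 1 + u$, i.e.\ $\sum_n c_m^{(n)}/n! = \delta_{m,0} + \delta_{m,1}$, which is exactly the desired identity $\Ad_{Q_\x}(P_l) = P_l + P_{\x+l}$. The main step requiring care is verifying that the bracket identity $[P_{j\x}, P_{k\x+l}] = \{j\}P_{(j+k)\x+l}$ is available for \emph{every} pair $(j,k)$ appearing in the expansion; this succeeds precisely because $\x$ is primitive (so Example \ref{eg: skein of punctured torus} applies directly) and $\det(\x\,|\,l) = \pm 1$, which holds since $\x$ and $l$ intersect in a single point.
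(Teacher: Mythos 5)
The paper cites this result from \cite[Lemma 5.5]{HSZ23} and does not reproduce a proof, so there is no in-paper argument for me to compare your proposal against. Your argument is nonetheless correct and essentially self-contained given Example \ref{eg: skein of punctured torus}: since $\x$ is primitive and $\det(\x\,|\,l) = \pm 1$, the relation $[P_{j\x}, P_{k\x+l}] = \{\pm j\}\, P_{(j+k)\x+l}$ is available for every pair $(j,k)$ arising in the expansion of $\Ad_{Q_\x}$, the factor $\{j\}$ cancels the denominator $\{j\}$ in $\log Q_\x$, and the resulting recursion $c_m^{(n+1)} = \sum_{j=1}^m \frac{(-1)^{j+1}}{j}\, c_{m-j}^{(n)}$ sums to $G(u,t) = (1+u)^t$, which equals $1+u$ at $t=1$. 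Your remark that in the negatively-intersecting case the sign from $\{-j\}$ cancels the sign from $\log Q_\x^{-1} = -\log Q_\x$, giving the identical recursion, is likewise right. The derivation is a clean algebraic consequence of the commutation relation of Example \ref{eg: skein of punctured torus} alone and would serve well as a self-contained proof of the cited lemma within the framework of this paper.
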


Now we can state our main theorem.

\begin{theorem}\label{thm: pentagon relation}
Consider the skein algebra of the punctured torus \(\Sk (T -D)\). We take the  morphism \eqref{eq: completion morphism} to be 
\begin{align*}
    H_1(T - D) \simeq \bZ^2 &\longrightarrow \bZ \\
    (i, j) &\longmapsto i+j,
\end{align*}
hence get a completion \(\widehat{\Sk} (T - D)\). 

Let \(\x\) and \(\y\) be the \((1, 0)\) and \((0, 1)\) loops, respectively, and \(v\), \(w\) be elements in \(R\). Then we have:
\begin{equation}\label{eq: pentagon relation}
    Q_\x(v) \cdot Q_\y(w) = Q_{\y}(w) \cdot Q_{\x + \y}(vw) \cdot Q_{\x}(v).
\end{equation}
\end{theorem}	

\begin{remark}
All terms involved in \eqref{eq: pentagon relation} live in \(\Sk^+(T- D)\). One can check that under \eqref{mor: skein to eha}, our pentagon relation is mapped to that in \cite{Z23}, which in \cite{Z23} is also shown to be equivalent to the ``\(5\)-term relation'' of \cite{GM19}.
\end{remark}

Note that \(\widehat{\Sk}(T - D)  \) is naturally graded by \(H_1(T - D)\simeq \bZ^2\), and in \eqref{eq: pentagon relation} the \(v\) and  \(w\) variables are just to record the degree. 
More precisely, if we expand everything, the equation \eqref{eq: pentagon relation} will become
\begin{align*}
\sum_{(i, j) \in {\bN}^2} v^i w^j \cdot & \left( \text{the degree $(i, j)$ part of }  Q_\x \cdot Q_\y \right) \\
&= \sum_{(i, j) \in {\bN}^2} v^i w^j \cdot \left( \text{the degree $(i, j)$ part of } Q_\y \cdot Q_{\x + \y} \cdot Q_\x \right) 
\end{align*}
The above holds if and only if each degree \( (i, j)\) part matches up for all \(i\) and \(j\), which is equivalent to 
\[
Q_\x \cdot Q_\y = Q_\y \cdot Q_{\x + \y} \cdot Q_\x.
\]
In other words, we only need to prove the \(v = w =1\) case.

We want to point out that the equation \eqref{eq: pentagon relation} is already nontrivial in low degree terms. For example, if we denote \(P'_{k\x} = \frac{1}{\{k\}} P_\x\) for primitive \(\x\), the degree \((2, 2)\) part (i.e. coefficient of term \(v^2 w^2\)) is equivalent to:
\[
[P'_{2\x}, P'_{2\y}] = [P'_{\x}, P'_{\x + 2\y}] - 2 P'_{2\x + 2\y},
\]
We don't know any reason for this relation to hold a priori, though one can verify it by a direct but tedious skein diagram computation. 

For a general surface \(\Sigma\), let \(l_1\) and \(l_2\) still be two simple closed curves intersecting positively in a single point. Since their neighborhood is homeomorphic to \(T- D\), we have:
\begin{corollary}\label{cor: pentagon for any surf}
\begin{equation*}
    Q_{l_1} (v) \cdot Q_{l_2} (w) = Q_{l_2} (w) \cdot Q_{l_1 + l_2} (wv)\cdot  Q_{l_1} (v)
\end{equation*}
holds in \(\widehat{\Sk}(\Sigma)\). 
\end{corollary}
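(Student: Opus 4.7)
The plan is to deduce the corollary functorially from Theorem \ref{thm: pentagon relation}: a regular neighborhood of $l_1 \cup l_2$ inside $\Sigma$ is modeled on $T - D$ so that the generators of the punctured-torus pentagon relation match $l_1$, $l_2$, and $l_1+l_2$, and the induced map on skein algebras transports the identity into $\widehat{\Sk}(\Sigma)$.

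Concretely, I would first fix a homeomorphism $\iota: T - D \xrightarrow{\sim} U \subset \Sigma$ onto a regular neighborhood of $l_1 \cup l_2$ that sends the $(1,0)$ and $(0,1)$ loops $\x, \y$ to $l_1, l_2$ respectively, preserving orientations and framings; such an $\iota$ exists because the pair $(l_1, l_2)$ intersects transversely in a single point with positive intersection number. The 3-manifold embedding $(T - D) \times I \hookrightarrow \Sigma \times I$ respects the HOMFLYPT skein relations and therefore induces an $R$-algebra homomorphism $\iota_*: \Sk(T - D) \to \Sk(\Sigma)$. Since for any framed oriented loop $l$ the elements $P_{kl}$ are defined entirely inside the tubular neighborhood of $l$ via the identification with $\Sk^+(S^1 \times D)$ of Example \ref{eg: sk of solid t}, we obtain $\iota_*(P_{k\x}) = P_{kl_1}$, $\iota_*(P_{k\y}) = P_{kl_2}$, and $\iota_*(P_{k(\x+\y)}) = P_{k(l_1+l_2)}$ — the last equality because $\iota$ carries the connected sum $\x+\y$ in $T-D$ to the connected sum $l_1+l_2$ in $\Sigma$. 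By Definition \ref{def: skein dilog} each skein dilogarithm is an explicit exponential series in the $P_{kl}$ with scalar coefficients, so $\iota_*$ intertwines $Q_\x(v), Q_\y(w), Q_{\x+\y}(vw)$ with $Q_{l_1}(v), Q_{l_2}(w), Q_{l_1+l_2}(vw)$.

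The remaining point, and the main technical obstacle, is to promote $\iota_*$ to a continuous map between completions. The completion $\widehat{\Sk}(\Sigma)$ is formed with respect to some chosen homomorphism $\delta_\Sigma: H_1(\Sigma; \bZ) \to \bZ$, and for the three dilogarithms above to live in $\widehat{\Sk}(\Sigma)$ one needs $\delta_\Sigma([l_1]), \delta_\Sigma([l_2]) > 0$ — the implicit positivity hypothesis of the corollary. Under this condition the pulled-back grading $\delta_\Sigma \circ (\iota_*)$ on $H_1(T-D) \cong \bZ^2$ is a positive linear form on the generators $[\x], [\y]$, hence defines the same completion on the relevant positive subalgebra $\Sk^+(T-D)$ as the grading $(a,b) \mapsto a+b$ used in Theorem \ref{thm: pentagon relation}. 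Thus $\iota_*$ is filtration-preserving and extends to a continuous map $\widehat{\iota}_*: \widehat{\Sk}(T-D) \to \widehat{\Sk}(\Sigma)$. Applying $\widehat{\iota}_*$ to both sides of \eqref{eq: pentagon relation} yields the claimed identity; everything else reduces to the bookkeeping verification that the two completions are genuinely compatible.
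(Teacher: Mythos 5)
Your proposal matches the paper's argument: the paper likewise observes that a regular neighborhood of $l_1\cup l_2$ is homeomorphic to $T-D$ and pushes Theorem~\ref{thm: pentagon relation} forward along the induced skein-algebra map. The extra care you take with the completions (checking that $\delta_\Sigma$ pulls back to a positive form so the map extends continuously) is a welcome elaboration of a point the paper leaves implicit, but it is the same proof.
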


\section{proof of the main theorem}

In this section we prove \eqref{eq: pentagon relation}. As mentioned in last section, we only need to prove the case when \(v = w =1\).

We recall a key lemma from \cite{HSZ23}.

\begin{lemma}[{\cite[Lemma 7.2]{HSZ23}}] \label{lem: sliding}
	 Let $M$ be a $3$-manifold with boundary $\Sigma$, and $\gamma$ a loop on $\Sigma$ bounding a disk $D$ in $M$, with framing pointing inward along \(D\). If an element  $\Psi \in {\Sk}(M)$ satisfies the ``sliding relation'':
	\begin{equation}\label{eq: sliding relation}
		(\skein{\gamma} - \bigcirc) \Psi = 0,
	\end{equation}
	then  $\Psi$ lives in the image of ${\Sk} (M \backslash D)\rightarrow {\Sk}(M).$ 
    Here \(\bigcirc\) is the unknot. 
\end{lemma}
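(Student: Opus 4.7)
The plan is to induct on the minimal number $n$ of transverse intersections of a representative of $\Psi$ with $D$. Define the filtration $F_n\Sk(M)$ as the $R$-span of isotopy classes of framed oriented links meeting $D$ transversely in at most $n$ points. After small ambient isotopy the HOMFLYPT local relations (crossing, framing, unknot) all take place inside balls disjoint from $D$, so the filtration is well-defined and exhaustive, and the bottom piece $F_0$ coincides with the image of $\Sk(M\setminus D)\to\Sk(M)$. It therefore suffices to show that any $\Psi\in F_n$ ($n\geq 1$) satisfying the sliding relation lies in $F_{n-1}$.

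The core task is to compute the induced action of $\skein{\gamma}-\bigcirc$ on the associated graded piece $F_n/F_{n-1}$. Represent a class there by a framed link $L$ meeting $D$ transversely at $n$ points. Because $\gamma$ is framed compatibly with $D$, the product $\skein{\gamma}\cdot L$ is represented by $L$ together with a pushoff $\gamma'\subset M$ of $\gamma$ bounding a parallel pushoff disk $D_\epsilon$ that still meets $L$ at the same $n$ points. I would then isotope $\gamma'$ through $M$ to a small unknot disjoint from $L$, resolving at each of the resulting $n$ strand-passings via the HOMFLYPT relation $L_+-L_-=zL_0$ together with the framing relation. Each event splits into (a) an ``unlinked'' term in which $\gamma'$ is pulled past the strand, carrying at most a framing factor $a^{\pm 1}$, and (b) a smoothing term of coefficient $z$ fusing $\gamma'$ with the strand along a segment running parallel to $\partial D$; this fused segment provides an embedded bigon in $M$ witnessing the corresponding intersection with $D$ as trivially removable, so the smoothing term lies in $F_{n-1}$. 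Iterating over all $n$ strands yields, modulo $F_{n-1}$, a relation of the form
\[
\skein{\gamma}\cdot [L]\;\equiv\;\lambda_L\cdot [L], \qquad \lambda_L\in R,
\]
where $\lambda_L$ records the product of framing scalars acquired along the way. Combined with the sliding relation this reads $(\lambda_L-\bigcirc)\cdot\bar\Psi=0$ in $F_n/F_{n-1}$; since $R$ is an integral domain and $F_n/F_{n-1}$ is $R$-torsion-free on its natural basis of isotopy classes, the induction closes as soon as $\lambda_L\neq\bigcirc$.

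The main obstacle is this last non-degeneracy $\lambda_L\neq\bigcirc$. The cleanest route I see is to localise the whole computation to a tubular neighborhood of $D$, a $3$-ball $B$ with $2n$ marked boundary points, where $\skein{\gamma}$ descends to an explicit element of the $n$-strand HOMFLYPT Hecke algebra $H_n$. Via the satellite description of $\Sk(S^1\times D^2)$ from Example~\ref{eg: sk of solid t}, this local operator can be compared directly with the scalar $\bigcirc$, and one checks that the two coincide only in the trivial $n=0$ case. Transporting this local non-degeneracy back to $\Sk(M)$ through the gluing $\Sk(B)\otimes\Sk(M\setminus B)\to\Sk(M)$ yields injectivity of $\skein{\gamma}-\bigcirc$ on $F_n/F_{n-1}$ and completes the induction.
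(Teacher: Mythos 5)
The paper does not reprove this lemma; it quotes it from \cite{HSZ23}, so there is no in-text argument to compare against, and I assess your sketch on its own terms. The filtration scaffolding is reasonable, but your central claim---that when $\gamma$ is slid past a strand of $L$ transverse to $D$, the resulting $L_0$ smoothing lies in $F_{n-1}$---is false, and is in fact homologically impossible whenever all $n$ intersections of $L$ with $D$ have the same sign. The HOMFLYPT relations preserve the $H_1(M;\bZ)$-grading, and $[\gamma]=0$ since $\gamma$ bounds $D$, so every link appearing in $(P_\gamma-\bigcirc)L$ lies in the class $[L]$; as $|\langle[L],[D]\rangle|=n$, any link in that class must meet $D$ geometrically at least $n$ times, so no smoothing term can fall into $F_{n-1}$. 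Concretely, with $M$ a solid torus, $D$ a meridian disk, $\gamma=\partial D$ and $L$ the core, one finds $(P_\gamma-\bigcirc)\cdot L=\pm z\cdot L'$ where $L'$ is a knot of winding number one that still meets $D$ once. The bigon you invoke does not exist.

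Two further problems compound this. Because $\gamma$ carries the $D$-compatible framing, sliding it through $D$ uses only crossing changes and no Reidemeister-I moves, so the ``unlinked'' term is precisely $\bigcirc\cdot L$ with no extra powers of $a$; even if the filtration drop held, your scalar $\lambda_L$ would equal $\bigcirc$ and the identity $(\lambda_L-\bigcirc)\bar\Psi=0$ would be vacuous. Also, the assertion that $F_n/F_{n-1}$ is $R$-free on isotopy classes of links with minimal $D$-intersection exactly $n$ is left unjustified: a skein relation applied in a ball away from $D$ can output a link whose \emph{minimal} $D$-intersection number is strictly smaller than that of the inputs, so there are relations in the associated graded you are not accounting for. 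A workable argument along these lines would have to identify the action of $P_\gamma-\bigcirc$ on $F_n/F_{n-1}$ as an explicit nontrivial operator (in the local annular model a raising-type operator, not multiplication by a scalar) and prove its injectivity directly; that analysis is the missing core of the proof.
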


We first reduce the pentagon relation for $\Sk (T - D)$ to an equation in the skein module of the genus $2$ handlebody $\Sk(\cH_2)$, and then use the above lemma to prove it. 

\begin{figure}[htbp]

			\centering		
			\tikzset{every picture/.style={line width=0.75pt}} 
			
			\begin{tikzpicture}[x=0.75pt,y=0.75pt,yscale=-1,xscale=1]
				
				\draw   (127.2,53.6) .. controls (147.2,43.6) and (177.87,54.93) .. (203.87,56.93) .. controls (229.87,58.93) and (233.2,53.6) .. (254.53,49.6) .. controls (275.87,45.6) and (291.87,53.6) .. (299.87,66.93) .. controls (307.87,80.27) and (303.87,113.6) .. (284.53,122.27) .. controls (265.2,130.93) and (254.53,120.93) .. (209.87,120.27) .. controls (165.2,119.6) and (132.53,137.6) .. (113.87,114.93) .. controls (95.2,92.27) and (107.2,63.6) .. (127.2,53.6) -- cycle ;
				\draw    (141.2,86.93) .. controls (153.87,101.6) and (175.87,98.27) .. (185.2,85.6) ;
				\draw    (148.53,88.93) .. controls (153.87,80.93) and (169.2,80.27) .. (177.2,87.6) ;
				\draw    (217.87,85.6) .. controls (230.53,100.27) and (252.53,96.93) .. (261.87,84.27) ;
				\draw    (225.2,87.6) .. controls (230.53,79.6) and (245.87,78.93) .. (253.87,86.27) ;
				\draw  [color={rgb, 255:red, 208; green, 2; blue, 27 }  ,draw opacity=1 ] (135.2,87.97) .. controls (135.2,78.23) and (147.89,70.33) .. (163.53,70.33) .. controls (179.18,70.33) and (191.87,78.23) .. (191.87,87.97) .. controls (191.87,97.71) and (179.18,105.6) .. (163.53,105.6) .. controls (147.89,105.6) and (135.2,97.71) .. (135.2,87.97) -- cycle ;
				\draw [color={rgb, 255:red, 208; green, 2; blue, 27 }  ,draw opacity=1 ]   (162.27,70.33) -- (169.87,66.93) ;
				\draw [color={rgb, 255:red, 208; green, 2; blue, 27 }  ,draw opacity=1 ]   (169.2,74.93) -- (162.27,70.33) ;
				\draw  [color={rgb, 255:red, 208; green, 2; blue, 27 }  ,draw opacity=1 ] (209.2,85.91) .. controls (209.2,75.77) and (223.23,67.56) .. (240.53,67.56) .. controls (257.84,67.56) and (271.87,75.77) .. (271.87,85.91) .. controls (271.87,96.05) and (257.84,104.27) .. (240.53,104.27) .. controls (223.23,104.27) and (209.2,96.05) .. (209.2,85.91) -- cycle ;
				\draw [color={rgb, 255:red, 208; green, 2; blue, 27 }  ,draw opacity=1 ]   (240.53,67.56) -- (247.62,64.27) ;
				\draw [color={rgb, 255:red, 208; green, 2; blue, 27 }  ,draw opacity=1 ]   (247,72.01) -- (240.53,67.56) ;
				\draw [color={rgb, 255:red, 74; green, 144; blue, 226 }  ,draw opacity=1 ]   (105.2,85.6) .. controls (105.1,64.46) and (151.6,64.46) .. (150.53,85.6) ;
				\draw [shift={(121.75,70.31)}, rotate = 354.41] [fill={rgb, 255:red, 74; green, 144; blue, 226 }  ,fill opacity=1 ][line width=0.08]  [draw opacity=0] (10.72,-5.15) -- (0,0) -- (10.72,5.15) -- (7.12,0) -- cycle    ;
				\draw [color={rgb, 255:red, 74; green, 144; blue, 226 }  ,draw opacity=1 ] [dash pattern={on 4.5pt off 4.5pt}]  (105.2,85.6) .. controls (109.2,100.93) and (141.2,98.27) .. (150.53,85.6) ;
				\draw [color={rgb, 255:red, 74; green, 144; blue, 226 }  ,draw opacity=1 ]   (181.87,86.27) .. controls (180.1,67.46) and (222.1,63.46) .. (222.53,89.6) ;
				\draw [shift={(197.08,71.44)}, rotate = 356.77] [fill={rgb, 255:red, 74; green, 144; blue, 226 }  ,fill opacity=1 ][line width=0.08]  [draw opacity=0] (10.72,-5.15) -- (0,0) -- (10.72,5.15) -- (7.12,0) -- cycle    ;
				\draw [color={rgb, 255:red, 74; green, 144; blue, 226 }  ,draw opacity=1 ] [dash pattern={on 4.5pt off 4.5pt}]  (181.87,86.27) .. controls (183.2,97.6) and (213.2,110.27) .. (222.53,89.6) ;
				\draw [color={rgb, 255:red, 74; green, 144; blue, 226 }  ,draw opacity=1 ]   (257.87,84.93) .. controls (253.1,60.96) and (307.1,63.46) .. (303.2,85.6) ;
				\draw [shift={(273.96,68.27)}, rotate = 356.96] [fill={rgb, 255:red, 74; green, 144; blue, 226 }  ,fill opacity=1 ][line width=0.08]  [draw opacity=0] (10.72,-5.15) -- (0,0) -- (10.72,5.15) -- (7.12,0) -- cycle    ;
				\draw [color={rgb, 255:red, 74; green, 144; blue, 226 }  ,draw opacity=1 ] [dash pattern={on 4.5pt off 4.5pt}]  (257.87,84.93) .. controls (259.2,96.27) and (293.87,106.27) .. (303.2,85.6) ;
				
				\draw (154,107.5) node [anchor=north west][inner sep=0.75pt]  [font=\small,color={rgb, 255:red, 208; green, 2; blue, 27 }  ,opacity=1 ] [align=left] {$\displaystyle a_{1}$};
				\draw (233.83,106.5) node [anchor=north west][inner sep=0.75pt]  [font=\small,color={rgb, 255:red, 208; green, 2; blue, 27 }  ,opacity=1 ] [align=left] {$\displaystyle a_{2}$};
				\draw (116.67,98) node [anchor=north west][inner sep=0.75pt]  [font=\small,color={rgb, 255:red, 74; green, 144; blue, 226 }  ,opacity=1 ] [align=left] {$\displaystyle b_{1}$};
				\draw (194,101.33) node [anchor=north west][inner sep=0.75pt]  [font=\small,color={rgb, 255:red, 74; green, 144; blue, 226 }  ,opacity=1 ] [align=left] {$\displaystyle b_{2}$};
				\draw (276.67,98.67) node [anchor=north west][inner sep=0.75pt]  [font=\small,color={rgb, 255:red, 74; green, 144; blue, 226 }  ,opacity=1 ] [align=left] {$\displaystyle b_{3}$};

			\end{tikzpicture}
			
			\caption{$a_i$ and $b_i$ on $\cH_2$}
			\label{fig: ab loops}

\end{figure}

\begin{proof}[Proof of Theorem \ref{thm: pentagon relation}]
	First, note that there is an $R$-\emph{module} isomorphism:
	$$
	\Sk(T-D) \longrightarrow \Sk (\cH_2), 
	$$
	given by the following construction. 
	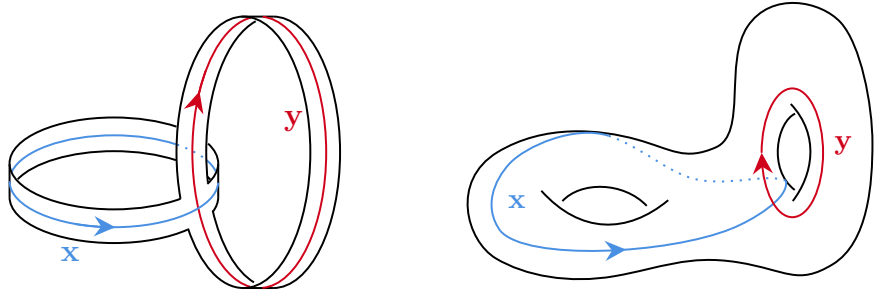
\begin{figure}[htbp]
		\centering

	\tikzset{every picture/.style={line width=0.75pt}} 
	
	\begin{tikzpicture}[x=0.75pt,y=0.75pt,yscale=-1,xscale=1]
		
		\draw  [draw opacity=0][dash pattern={on 0.84pt off 2.51pt}] (174.47,136.14) .. controls (180.36,138.11) and (185.28,140.63) .. (188.85,143.51) -- (142.63,154.61) -- cycle ; \draw [color={rgb, 255:red, 74; green, 144; blue, 226 }  ,draw opacity=1 ][dash pattern={on 0.84pt off 2.51pt}] [dash pattern={on 0.84pt off 2.51pt}]  (174.47,136.14) .. controls (180.36,138.11) and (185.28,140.63) .. (188.85,143.51) ;  
		\draw  [draw opacity=0] (91.17,149.74) .. controls (96.24,139.27) and (117.36,131.42) .. (142.63,131.42) .. controls (154.31,131.42) and (165.09,133.09) .. (173.82,135.93) -- (142.63,154.61) -- cycle ; \draw [color={rgb, 255:red, 74; green, 144; blue, 226 }  ,draw opacity=1 ]   (91.17,149.74) .. controls (96.24,139.27) and (117.36,131.42) .. (142.63,131.42) .. controls (154.31,131.42) and (165.09,133.09) .. (173.82,135.93) ;  
		\draw  [draw opacity=0] (185.27,111.43) .. controls (190.05,90.31) and (199.97,74.97) .. (211.78,72.02) -- (215.95,140) -- cycle ; \draw [color={rgb, 255:red, 208; green, 2; blue, 27 }  ,draw opacity=1 ]   (185.5,110.44) .. controls (190.36,89.83) and (200.16,74.92) .. (211.78,72.02) ;  \draw [shift={(185.79,109.23)}, rotate = 106.02] [fill={rgb, 255:red, 208; green, 2; blue, 27 }  ,fill opacity=1 ][line width=0.08]  [draw opacity=0] (10.72,-5.15) -- (0,0) -- (10.72,5.15) -- (7.12,0) -- cycle    ;
		\draw  [draw opacity=0] (211.31,207.86) .. controls (194.87,203.27) and (182.2,174.64) .. (182.2,140) .. controls (182.2,124.54) and (184.72,110.28) .. (188.98,98.81) -- (215.95,140) -- cycle ; \draw  [color={rgb, 255:red, 208; green, 2; blue, 27 }  ,draw opacity=1 ] (211.31,207.86) .. controls (194.87,203.27) and (182.2,174.64) .. (182.2,140) .. controls (182.2,124.54) and (184.72,110.28) .. (188.98,98.81) ;  
		\draw  [draw opacity=0] (189.32,143.9) .. controls (191.47,145.71) and (193.09,147.67) .. (194.09,149.73) -- (142.63,154.61) -- cycle ; \draw [color={rgb, 255:red, 74; green, 144; blue, 226 }  ,draw opacity=1 ]   (189.32,143.9) .. controls (191.47,145.71) and (193.09,147.67) .. (194.09,149.73) ;  
		\draw  [draw opacity=0] (176.23,163.46) .. controls (174.92,156.14) and (174.2,148.24) .. (174.2,140) .. controls (174.2,102.17) and (189.31,71.5) .. (207.95,71.5) .. controls (226.59,71.5) and (241.7,102.17) .. (241.7,140) .. controls (241.7,177.83) and (226.59,208.5) .. (207.95,208.5) .. controls (196.34,208.5) and (186.1,196.6) .. (180.03,178.49) -- (207.95,140) -- cycle ; \draw   (176.23,163.46) .. controls (174.92,156.14) and (174.2,148.24) .. (174.2,140) .. controls (174.2,102.17) and (189.31,71.5) .. (207.95,71.5) .. controls (226.59,71.5) and (241.7,102.17) .. (241.7,140) .. controls (241.7,177.83) and (226.59,208.5) .. (207.95,208.5) .. controls (196.34,208.5) and (186.1,196.6) .. (180.03,178.49) ;  
		\draw  [draw opacity=0] (176.23,163.46) .. controls (167.12,166.79) and (155.41,168.8) .. (142.63,168.8) .. controls (113.57,168.8) and (90.01,158.41) .. (90.01,145.61) .. controls (90.01,132.8) and (113.57,122.42) .. (142.63,122.42) .. controls (154.48,122.42) and (165.41,124.14) .. (174.21,127.05) -- (142.63,145.61) -- cycle ; \draw   (176.23,163.46) .. controls (167.12,166.79) and (155.41,168.8) .. (142.63,168.8) .. controls (113.57,168.8) and (90.01,158.41) .. (90.01,145.61) .. controls (90.01,132.8) and (113.57,122.42) .. (142.63,122.42) .. controls (154.48,122.42) and (165.41,124.14) .. (174.21,127.05) ;  
		\draw  [draw opacity=0] (222.95,71.5) .. controls (222.95,71.5) and (222.95,71.5) .. (222.95,71.5) .. controls (241.59,71.5) and (256.7,102.17) .. (256.7,140) .. controls (256.7,177.83) and (241.59,208.5) .. (222.95,208.5) -- (222.95,140) -- cycle ; \draw   (222.95,71.5) .. controls (222.95,71.5) and (222.95,71.5) .. (222.95,71.5) .. controls (241.59,71.5) and (256.7,102.17) .. (256.7,140) .. controls (256.7,177.83) and (241.59,208.5) .. (222.95,208.5) ;  
		\draw  [draw opacity=0] (190.04,155.26) .. controls (189.49,150.35) and (189.2,145.24) .. (189.2,140) .. controls (189.2,108.34) and (199.78,81.7) .. (214.16,73.85) -- (222.95,140) -- cycle ; \draw   (190.04,155.26) .. controls (189.49,150.35) and (189.2,145.24) .. (189.2,140) .. controls (189.2,108.34) and (199.78,81.7) .. (214.16,73.85) ;  
		\draw  [draw opacity=0] (190.13,135.61) .. controls (193.42,138.64) and (195.25,142.03) .. (195.25,145.61) .. controls (195.25,149.32) and (193.27,152.83) .. (189.75,155.95) -- (142.63,145.61) -- cycle ; \draw   (190.13,135.61) .. controls (193.42,138.64) and (195.25,142.03) .. (195.25,145.61) .. controls (195.25,149.32) and (193.27,152.83) .. (189.75,155.95) ;  
		\draw  [draw opacity=0] (213.29,205.65) .. controls (204.08,200.08) and (196.52,186.73) .. (192.45,169.36) -- (222.95,140) -- cycle ; \draw   (213.29,205.65) .. controls (204.08,200.08) and (196.52,186.73) .. (192.45,169.36) ;  
		\draw    (90.01,146) -- (90.01,162.61) ;
		\draw  [draw opacity=0] (179.91,178.98) .. controls (170.38,183.19) and (157.2,185.8) .. (142.63,185.8) .. controls (113.57,185.8) and (90.01,175.41) .. (90.01,162.61) -- (142.63,162.61) -- cycle ; \draw   (179.91,178.98) .. controls (170.38,183.19) and (157.2,185.8) .. (142.63,185.8) .. controls (113.57,185.8) and (90.01,175.41) .. (90.01,162.61) ;  
		\draw  [draw opacity=0] (195.25,162.61) .. controls (195.25,162.61) and (195.25,162.61) .. (195.25,162.61) .. controls (195.25,162.61) and (195.25,162.61) .. (195.25,162.61) .. controls (195.25,165.19) and (194.3,167.68) .. (192.52,170) -- (142.63,162.61) -- cycle ; \draw   (195.25,162.61) .. controls (195.25,162.61) and (195.25,162.61) .. (195.25,162.61) .. controls (195.25,162.61) and (195.25,162.61) .. (195.25,162.61) .. controls (195.25,165.19) and (194.3,167.68) .. (192.52,170) ;  
		\draw    (195.25,146) -- (195.25,162.61) ;
		\draw  [draw opacity=0] (94.13,153.59) .. controls (102.12,145.26) and (120.83,139.42) .. (142.63,139.42) .. controls (154.35,139.42) and (165.17,141.1) .. (173.92,143.96) -- (142.63,162.61) -- cycle ; \draw   (94.13,153.59) .. controls (102.12,145.26) and (120.83,139.42) .. (142.63,139.42) .. controls (154.35,139.42) and (165.17,141.1) .. (173.92,143.96) ;  
		\draw  [draw opacity=0] (189.85,152.35) .. controls (190.35,152.81) and (190.83,153.27) .. (191.27,153.73) -- (142.63,162.61) -- cycle ; \draw   (189.85,152.35) .. controls (190.35,152.81) and (190.83,153.27) .. (191.27,153.73) ;  
		\draw  [draw opacity=0] (143.33,177.79) .. controls (143.1,177.8) and (142.87,177.8) .. (142.63,177.8) .. controls (113.57,177.8) and (90.01,167.41) .. (90.01,154.61) -- (142.63,154.61) -- cycle ; \draw [color={rgb, 255:red, 74; green, 144; blue, 226 }  ,draw opacity=1 ]   (142.63,177.8) .. controls (113.57,177.8) and (90.01,167.41) .. (90.01,154.61) ;  \draw [shift={(143.33,177.79)}, rotate = 182.47] [fill={rgb, 255:red, 74; green, 144; blue, 226 }  ,fill opacity=1 ][line width=0.08]  [draw opacity=0] (10.72,-5.15) -- (0,0) -- (10.72,5.15) -- (7.12,0) -- cycle    ;
		\draw  [draw opacity=0] (195.23,155.37) .. controls (194.31,167.82) and (171.12,177.8) .. (142.63,177.8) .. controls (135.7,177.8) and (129.07,177.21) .. (123.01,176.13) -- (142.63,154.61) -- cycle ; \draw [color={rgb, 255:red, 74; green, 144; blue, 226 }  ,draw opacity=1 ]   (195.23,155.37) .. controls (194.31,167.82) and (171.12,177.8) .. (142.63,177.8) .. controls (135.7,177.8) and (129.07,177.21) .. (123.01,176.13) ;  
		\draw    (207,71.5) -- (222.95,71.5) ;
		\draw    (207.22,208.5) -- (222.95,208.5) ;
		\draw  [line width=0.75]  (336,140) .. controls (359.8,125.5) and (388.2,128.5) .. (406.2,133.5) .. controls (424.2,138.5) and (430.2,144.5) .. (446.2,137.5) .. controls (462.2,130.5) and (450.2,94.5) .. (462.2,76.5) .. controls (474.2,58.5) and (499.2,63.5) .. (509.2,74.5) .. controls (519.2,85.5) and (525.2,115.5) .. (525.2,135.5) .. controls (525.2,155.5) and (523.2,185.5) .. (505.2,196.5) .. controls (487.2,207.5) and (482.2,199.5) .. (457.2,195.5) .. controls (432.2,191.5) and (418.2,198.5) .. (395.2,202.5) .. controls (372.2,206.5) and (349.95,202.37) .. (335.2,193.5) .. controls (320.45,184.63) and (312.2,154.5) .. (336,140) -- cycle ;
		\draw [line width=0.75]    (358.2,159.4) .. controls (377.89,181.33) and (401.52,181.33) .. (422.2,164.17) ;
		\draw [line width=0.75]    (368.83,164.65) .. controls (377.89,154.64) and (399.55,154.64) .. (411.37,167.03) ;
		\draw [line width=0.75]    (484,115.6) .. controls (499,131.6) and (495,151.6) .. (485,164.6) ;
		\draw [line width=0.75]    (486.2,120.5) .. controls (477.2,124.5) and (472.2,148.5) .. (486,159) ;
		\draw [color={rgb, 255:red, 74; green, 144; blue, 226 }  ,draw opacity=1 ][line width=0.75]    (337.53,179.67) .. controls (350.87,195) and (422.2,191.5) .. (454.2,178.5) ;
		\draw [shift={(400.99,189.11)}, rotate = 176.53] [fill={rgb, 255:red, 74; green, 144; blue, 226 }  ,fill opacity=1 ][line width=0.08]  [draw opacity=0] (10.72,-5.15) -- (0,0) -- (10.72,5.15) -- (7.12,0) -- cycle    ;
		\draw [color={rgb, 255:red, 74; green, 144; blue, 226 }  ,draw opacity=1 ][line width=0.75]    (454.2,178.5) .. controls (464.2,174.5) and (483.53,163.33) .. (481.53,154.33) ;
		\draw [color={rgb, 255:red, 74; green, 144; blue, 226 }  ,draw opacity=1 ][line width=0.75]  [dash pattern={on 0.84pt off 2.51pt}]  (392.2,131.67) .. controls (411.53,138.33) and (423.34,152.39) .. (440.2,154.33) .. controls (457.06,156.27) and (471.84,150.61) .. (481.53,154.33) ;
		\draw [color={rgb, 255:red, 74; green, 144; blue, 226 }  ,draw opacity=1 ][line width=0.75]    (337.53,179.67) .. controls (328.87,167.67) and (334.2,154.33) .. (342.2,145.67) .. controls (350.2,137) and (375.02,126.07) .. (392.2,131.67) ;
		\draw  [draw opacity=0][line width=0.75]  (469.3,140.25) .. controls (469.3,140.25) and (469.3,140.25) .. (469.3,140.25) .. controls (469.3,122.3) and (476.24,107.75) .. (484.8,107.75) .. controls (493.36,107.75) and (500.3,122.3) .. (500.3,140.25) .. controls (500.3,158.2) and (493.36,172.75) .. (484.8,172.75) .. controls (476.24,172.75) and (469.3,158.2) .. (469.3,140.25) -- (484.8,140.25) -- cycle ; \draw [color={rgb, 255:red, 208; green, 2; blue, 27 }  ,draw opacity=1 ][line width=0.75]    (469.3,140.25) .. controls (469.3,122.3) and (476.24,107.75) .. (484.8,107.75) .. controls (493.36,107.75) and (500.3,122.3) .. (500.3,140.25) .. controls (500.3,158.2) and (493.36,172.75) .. (484.8,172.75) .. controls (476.71,172.75) and (470.07,159.76) .. (469.36,143.18) ; \draw [shift={(469.3,140.25)}, rotate = 86.44] [fill={rgb, 255:red, 208; green, 2; blue, 27 }  ,fill opacity=1 ][line width=0.08]  [draw opacity=0] (10.72,-5.15) -- (0,0) -- (10.72,5.15) -- (7.12,0) -- cycle    ; 
		\draw  [draw opacity=0] (217.68,71.59) .. controls (235.52,73.42) and (249.7,103.35) .. (249.7,140) .. controls (249.7,176.79) and (235.41,206.81) .. (217.48,208.43) -- (215.95,140) -- cycle ; \draw  [color={rgb, 255:red, 208; green, 2; blue, 27 }  ,draw opacity=1 ] (217.68,71.59) .. controls (235.52,73.42) and (249.7,103.35) .. (249.7,140) .. controls (249.7,176.79) and (235.41,206.81) .. (217.48,208.43) ;  
		
		\draw (114.33,186) node [anchor=north west][inner sep=0.75pt]  [color={rgb, 255:red, 74; green, 144; blue, 226 }  ,opacity=1 ] [align=left] {$\displaystyle \mathbf{x}$};
		\draw (227,117.67) node [anchor=north west][inner sep=0.75pt]  [color={rgb, 255:red, 208; green, 2; blue, 27 }  ,opacity=1 ] [align=left] {$\displaystyle \mathbf{y}$};
		\draw (340,160.33) node [anchor=north west][inner sep=0.75pt]  [font=\small,color={rgb, 255:red, 74; green, 144; blue, 226 }  ,opacity=1 ] [align=left] {$\displaystyle \mathbf{x}$};
		\draw (504.5,130.33) node [anchor=north west][inner sep=0.75pt]  [font=\small,color={rgb, 255:red, 208; green, 2; blue, 27 }  ,opacity=1 ] [align=left] {$\displaystyle \mathbf{y}$};

	\end{tikzpicture}
		
		\caption{$T-D$ (left) and $(T-D) \times I$ (right)}
		\label{fig: punctured torus}
	\end{figure}
	
	The left picture in Figure \ref{fig: punctured torus} is homeomorphic to  $T - D$, while the right is homeomorphic to $(T-D) \times I$, where $I = [0, 1]$. We take the convention that the loops $\x \times \{1\}$ and $\y \times \{1\}$ are drawn as shown in the picture. 
	
	
	On the other hand, we can do a ``$\pi/4$ twist'' to identify $(T - D) \times I$ with the standard handlebody $\cH_2$, as in Figure \ref{fig: twist}.  This homeomorphism would send the loop $\x \times \{1\}$ to $b_1 + a_1 + b_2$, and $\y$ to $a_2$, where $a_i$ and $b_i$ are some standard loops on $\Sigma_2 = \partial \cH_2$, drawn in Figure \ref{fig: ab loops}, and ``$+$'' means doing a surgery at the intersection point, as in Notation \ref{notation: x+l}.

	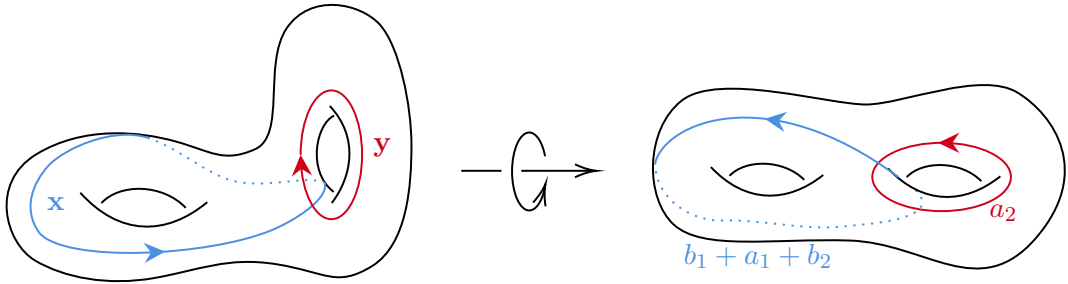
\begin{figure}[htbp]
		\centering

		\tikzset{every picture/.style={line width=0.75pt}} 
		
		\begin{tikzpicture}[x=0.75pt,y=0.75pt,yscale=-1,xscale=1]
			
			\draw [color={rgb, 255:red, 74; green, 144; blue, 226 }  ,draw opacity=1 ] [dash pattern={on 0.84pt off 2.51pt}]  (393.2,127.07) .. controls (392.53,144.4) and (410.53,157.07) .. (427.2,156.4) .. controls (443.87,155.73) and (468.53,161.07) .. (483.87,159.73) .. controls (499.2,158.4) and (541.74,156.35) .. (523.2,139.87) ;
			\draw  [line width=0.75]  (81,123) .. controls (104.8,108.5) and (133.2,111.5) .. (151.2,116.5) .. controls (169.2,121.5) and (175.2,127.5) .. (191.2,120.5) .. controls (207.2,113.5) and (195.2,77.5) .. (207.2,59.5) .. controls (219.2,41.5) and (244.2,46.5) .. (254.2,57.5) .. controls (264.2,68.5) and (270.2,98.5) .. (270.2,118.5) .. controls (270.2,138.5) and (268.2,168.5) .. (250.2,179.5) .. controls (232.2,190.5) and (227.2,182.5) .. (202.2,178.5) .. controls (177.2,174.5) and (163.2,181.5) .. (140.2,185.5) .. controls (117.2,189.5) and (94.95,185.37) .. (80.2,176.5) .. controls (65.45,167.63) and (57.2,137.5) .. (81,123) -- cycle ;
			\draw [line width=0.75]    (103.2,142.4) .. controls (122.89,164.33) and (146.52,164.33) .. (167.2,147.17) ;
			\draw [line width=0.75]    (113.83,147.65) .. controls (122.89,137.64) and (144.55,137.64) .. (156.37,150.03) ;
			\draw [line width=0.75]    (229,98.6) .. controls (244,114.6) and (240,134.6) .. (230,147.6) ;
			\draw [line width=0.75]    (231.2,103.5) .. controls (222.2,107.5) and (217.2,131.5) .. (231,142) ;
			\draw [color={rgb, 255:red, 74; green, 144; blue, 226 }  ,draw opacity=1 ][line width=0.75]    (82.53,162.67) .. controls (95.87,178) and (167.2,174.5) .. (199.2,161.5) ;
			\draw [shift={(145.99,172.11)}, rotate = 176.53] [fill={rgb, 255:red, 74; green, 144; blue, 226 }  ,fill opacity=1 ][line width=0.08]  [draw opacity=0] (10.72,-5.15) -- (0,0) -- (10.72,5.15) -- (7.12,0) -- cycle    ;
			\draw [color={rgb, 255:red, 74; green, 144; blue, 226 }  ,draw opacity=1 ][line width=0.75]    (199.2,161.5) .. controls (209.2,157.5) and (228.53,146.33) .. (226.53,137.33) ;
			\draw [color={rgb, 255:red, 74; green, 144; blue, 226 }  ,draw opacity=1 ][line width=0.75]  [dash pattern={on 0.84pt off 2.51pt}]  (137.2,114.67) .. controls (156.53,121.33) and (168.34,135.39) .. (185.2,137.33) .. controls (202.06,139.27) and (216.84,133.61) .. (226.53,137.33) ;
			\draw [color={rgb, 255:red, 74; green, 144; blue, 226 }  ,draw opacity=1 ][line width=0.75]    (82.53,162.67) .. controls (73.87,150.67) and (79.2,137.33) .. (87.2,128.67) .. controls (95.2,120) and (120.02,109.07) .. (137.2,114.67) ;
			\draw  [draw opacity=0][line width=0.75]  (214.3,123.25) .. controls (214.3,123.25) and (214.3,123.25) .. (214.3,123.25) .. controls (214.3,105.3) and (221.24,90.75) .. (229.8,90.75) .. controls (238.36,90.75) and (245.3,105.3) .. (245.3,123.25) .. controls (245.3,141.2) and (238.36,155.75) .. (229.8,155.75) .. controls (221.24,155.75) and (214.3,141.2) .. (214.3,123.25) -- (229.8,123.25) -- cycle ; \draw [color={rgb, 255:red, 208; green, 2; blue, 27 }  ,draw opacity=1 ][line width=0.75]    (214.3,123.25) .. controls (214.3,105.3) and (221.24,90.75) .. (229.8,90.75) .. controls (238.36,90.75) and (245.3,105.3) .. (245.3,123.25) .. controls (245.3,141.2) and (238.36,155.75) .. (229.8,155.75) .. controls (221.71,155.75) and (215.07,142.76) .. (214.36,126.18) ; \draw [shift={(214.3,123.25)}, rotate = 86.44] [fill={rgb, 255:red, 208; green, 2; blue, 27 }  ,fill opacity=1 ][line width=0.08]  [draw opacity=0] (10.72,-5.15) -- (0,0) -- (10.72,5.15) -- (7.12,0) -- cycle    ; 
			\draw    (295.33,131.33) -- (315.6,131.33) ;
			\draw    (325.6,131.33) -- (361.6,131.33) ;
			\draw [shift={(363.6,131.33)}, rotate = 180] [color={rgb, 255:red, 0; green, 0; blue, 0 }  ][line width=0.75]    (10.93,-3.29) .. controls (6.95,-1.4) and (3.31,-0.3) .. (0,0) .. controls (3.31,0.3) and (6.95,1.4) .. (10.93,3.29)   ;
			\draw  [draw opacity=0] (337.83,137.84) .. controls (336.69,145.68) and (333.44,151.33) .. (329.6,151.33) .. controls (324.81,151.33) and (320.93,142.53) .. (320.93,131.67) .. controls (320.93,120.81) and (324.81,112) .. (329.6,112) .. controls (333.18,112) and (336.26,116.93) .. (337.58,123.97) -- (329.6,131.67) -- cycle ; \draw    (337.47,139.9) .. controls (336.1,146.65) and (333.09,151.33) .. (329.6,151.33) .. controls (324.81,151.33) and (320.93,142.53) .. (320.93,131.67) .. controls (320.93,120.81) and (324.81,112) .. (329.6,112) .. controls (333.18,112) and (336.26,116.93) .. (337.58,123.97) ;  \draw [shift={(337.83,137.84)}, rotate = 106.78] [color={rgb, 255:red, 0; green, 0; blue, 0 }  ][line width=0.75]    (10.93,-3.29) .. controls (6.95,-1.4) and (3.31,-0.3) .. (0,0) .. controls (3.31,0.3) and (6.95,1.4) .. (10.93,3.29)   ;
			\draw   (406.53,95.87) .. controls (425.2,81.2) and (481.38,98.13) .. (499.87,97.87) .. controls (518.35,97.61) and (557.47,80.66) .. (575.87,91.87) .. controls (594.27,103.07) and (613.87,131.87) .. (585.2,165.2) .. controls (556.53,198.53) and (529.64,167.74) .. (491.2,166.53) .. controls (452.76,165.33) and (417.2,171.87) .. (402.53,158.53) .. controls (387.87,145.2) and (387.87,110.53) .. (406.53,95.87) -- cycle ;
			\draw [line width=0.75]    (421.2,129.43) .. controls (438.64,147.95) and (459.56,147.95) .. (477.87,133.45) ;
			\draw [line width=0.75]    (430.62,133.86) .. controls (438.64,125.4) and (457.82,125.4) .. (468.28,135.87) ;
			\draw [line width=0.75]    (510.53,130.09) .. controls (527.97,148.61) and (548.89,148.61) .. (567.2,134.12) ;
			\draw [line width=0.75]    (519.95,134.52) .. controls (527.97,126.07) and (547.15,126.07) .. (557.61,136.54) ;
			\draw  [draw opacity=0][line width=0.75]  (537.88,117.2) .. controls (557.04,117.28) and (572.53,124.97) .. (572.53,134.44) .. controls (572.53,143.97) and (556.88,151.69) .. (537.57,151.69) .. controls (518.27,151.69) and (502.62,143.97) .. (502.62,134.44) .. controls (502.62,124.92) and (518.27,117.2) .. (537.57,117.2) -- (537.57,134.44) -- cycle ; \draw [color={rgb, 255:red, 208; green, 2; blue, 27 }  ,draw opacity=1 ][line width=0.75]    (537.88,117.2) .. controls (557.04,117.28) and (572.53,124.97) .. (572.53,134.44) .. controls (572.53,143.97) and (556.88,151.69) .. (537.57,151.69) .. controls (518.27,151.69) and (502.62,143.97) .. (502.62,134.44) .. controls (502.62,124.74) and (518.85,116.92) .. (538.65,117.21) ; \draw [shift={(535.84,117.22)}, rotate = 360] [fill={rgb, 255:red, 208; green, 2; blue, 27 }  ,fill opacity=1 ][line width=0.08]  [draw opacity=0] (10.72,-5.15) -- (0,0) -- (10.72,5.15) -- (7.12,0) -- cycle    ; 
			\draw [color={rgb, 255:red, 74; green, 144; blue, 226 }  ,draw opacity=1 ][line width=0.75]    (515.87,134.53) .. controls (449.2,85.07) and (398.53,107.07) .. (393.2,127.07) ;
			\draw [shift={(448.41,105.07)}, rotate = 8.36] [fill={rgb, 255:red, 74; green, 144; blue, 226 }  ,fill opacity=1 ][line width=0.08]  [draw opacity=0] (10.72,-5.15) -- (0,0) -- (10.72,5.15) -- (7.12,0) -- cycle    ;
			
			\draw (85,143.33) node [anchor=north west][inner sep=0.75pt]  [font=\small,color={rgb, 255:red, 74; green, 144; blue, 226 }  ,opacity=1 ] [align=left] {$\displaystyle \mathbf{x}$};
			\draw (249.5,113.33) node [anchor=north west][inner sep=0.75pt]  [font=\small,color={rgb, 255:red, 208; green, 2; blue, 27 }  ,opacity=1 ] [align=left] {$\displaystyle \mathbf{y}$};
			\draw (406,166.6) node [anchor=north west][inner sep=0.75pt]  [font=\small,color={rgb, 255:red, 74; green, 144; blue, 226 }  ,opacity=1 ] [align=left] {$\displaystyle b_{1} +a_{1} +b_2$};
			\draw (560.17,146.67) node [anchor=north west][inner sep=0.75pt]  [font=\small,color={rgb, 255:red, 208; green, 2; blue, 27 }  ,opacity=1 ] [align=left] {$\displaystyle a_{2}$};

		\end{tikzpicture}
		\caption{The homeomorphism between $(T-D) \times I$ and $\cH_2$}
		\label{fig: twist}
	\end{figure}
	
	The induced skein \emph{module} isomorphism:
	$$
	\Sk( (T-D) \times I) \longrightarrow \Sk (\cH_2),
	$$
	will send 
	\begin{align*}
		P_\x \cdot P_\y  & \mapsto P_{b_1 + a_1 + b_2} \cdot P_{a_2} \cdot [\emptyset] , \\ 
		P_{\y} \cdot P_{\x + \y} \cdot P_\x & \mapsto P_{a_2} \cdot P_{b_1 + a_1 +b_2 + a_2} \cdot P_{b_1 + a_1 + b_2} \cdot [\emptyset],
	\end{align*}
	as shown in Figure \ref{fig: loops}.
	Here the right hand side should be viewed as elements in $\Sk(\Sigma_2)$ acting on $\cH_2$, and $[\emptyset]$ means the skein class given by the empty link. Actually, for an element $P_l \in \Sk(\Sigma_2)$, $P_l \cdot [\emptyset]$ is just its image in $\Sk(\cH_2$) under the inclusion map.
	
	\begin{figure}[htbp]
		\begin{subfigure}{1 \textwidth} 
			\centering

		\tikzset{every picture/.style={line width=0.75pt}} 
		
		\begin{tikzpicture}[x=0.75pt,y=0.75pt,yscale=-1,xscale=1]
			
			\draw [color={rgb, 255:red, 144; green, 19; blue, 254 }  ,draw opacity=1 ] [dash pattern={on 0.84pt off 2.51pt}]  (363.87,127.33) .. controls (386.46,110.39) and (411.01,118.32) .. (419.87,122.13) .. controls (428.73,125.95) and (456.53,139.47) .. (473.2,141.47) .. controls (489.87,143.47) and (511.2,130.13) .. (517.2,138.8) ;
			\draw  [line width=0.75]  (113,123) .. controls (136.8,108.5) and (165.2,111.5) .. (183.2,116.5) .. controls (201.2,121.5) and (207.2,127.5) .. (223.2,120.5) .. controls (239.2,113.5) and (227.2,77.5) .. (239.2,59.5) .. controls (251.2,41.5) and (276.2,46.5) .. (286.2,57.5) .. controls (296.2,68.5) and (302.2,98.5) .. (302.2,118.5) .. controls (302.2,138.5) and (300.2,168.5) .. (282.2,179.5) .. controls (264.2,190.5) and (259.2,182.5) .. (234.2,178.5) .. controls (209.2,174.5) and (195.2,181.5) .. (172.2,185.5) .. controls (149.2,189.5) and (126.95,185.37) .. (112.2,176.5) .. controls (97.45,167.63) and (89.2,137.5) .. (113,123) -- cycle ;
			\draw [line width=0.75]    (135.2,142.4) .. controls (154.89,164.33) and (178.52,164.33) .. (199.2,147.17) ;
			\draw [line width=0.75]    (145.83,147.65) .. controls (154.89,137.64) and (176.55,137.64) .. (188.37,150.03) ;
			\draw [line width=0.75]    (258.2,90.5) .. controls (278.2,106.5) and (272.2,141.5) .. (259.2,153.5) ;
			\draw [line width=0.75]    (263.2,103.5) .. controls (254.2,107.5) and (249.2,131.5) .. (263,142) ;
			\draw [color={rgb, 255:red, 74; green, 144; blue, 226 }  ,draw opacity=1 ][line width=0.75]    (114.53,162.67) .. controls (127.87,178) and (199.2,174.5) .. (231.2,161.5) ;
			\draw [shift={(177.99,172.11)}, rotate = 176.53] [fill={rgb, 255:red, 74; green, 144; blue, 226 }  ,fill opacity=1 ][line width=0.08]  [draw opacity=0] (10.72,-5.15) -- (0,0) -- (10.72,5.15) -- (7.12,0) -- cycle    ;
			\draw [color={rgb, 255:red, 74; green, 144; blue, 226 }  ,draw opacity=1 ][line width=0.75]    (231.2,161.5) .. controls (241.2,157.5) and (260.53,146.33) .. (258.53,137.33) ;
			\draw [color={rgb, 255:red, 74; green, 144; blue, 226 }  ,draw opacity=1 ][line width=0.75]  [dash pattern={on 0.84pt off 2.51pt}]  (169.2,114.67) .. controls (188.53,121.33) and (200.34,135.39) .. (217.2,137.33) .. controls (234.06,139.27) and (248.84,133.61) .. (258.53,137.33) ;
			\draw [color={rgb, 255:red, 74; green, 144; blue, 226 }  ,draw opacity=1 ][line width=0.75]    (114.53,162.67) .. controls (105.87,150.67) and (111.2,137.33) .. (119.2,128.67) .. controls (127.2,120) and (152.02,109.07) .. (169.2,114.67) ;
			\draw  [draw opacity=0][line width=0.75]  (243.91,144.36) .. controls (242.19,138.09) and (241.2,130.79) .. (241.2,123) .. controls (241.2,99.53) and (250.15,80.5) .. (261.2,80.5) .. controls (272.25,80.5) and (281.2,99.53) .. (281.2,123) .. controls (281.2,146.47) and (272.25,165.5) .. (261.2,165.5) .. controls (257.21,165.5) and (253.49,163.02) .. (250.37,158.74) -- (261.2,123) -- cycle ; \draw [color={rgb, 255:red, 208; green, 2; blue, 27 }  ,draw opacity=1 ][line width=0.75]    (243.91,144.36) .. controls (242.19,138.09) and (241.2,130.79) .. (241.2,123) .. controls (241.2,99.53) and (250.15,80.5) .. (261.2,80.5) .. controls (272.25,80.5) and (281.2,99.53) .. (281.2,123) .. controls (281.2,146.47) and (272.25,165.5) .. (261.2,165.5) .. controls (257.99,165.5) and (254.95,163.89) .. (252.26,161.03) ; \draw [shift={(250.37,158.74)}, rotate = 32.71] [fill={rgb, 255:red, 208; green, 2; blue, 27 }  ,fill opacity=1 ][line width=0.08]  [draw opacity=0] (10.72,-5.15) -- (0,0) -- (10.72,5.15) -- (7.12,0) -- cycle    ; 
			\draw  [line width=0.75]  (369.67,121.67) .. controls (393.47,107.17) and (421.87,110.17) .. (439.87,115.17) .. controls (457.87,120.17) and (463.87,126.17) .. (479.87,119.17) .. controls (495.87,112.17) and (483.87,76.17) .. (495.87,58.17) .. controls (507.87,40.17) and (532.87,45.17) .. (542.87,56.17) .. controls (552.87,67.17) and (558.87,97.17) .. (558.87,117.17) .. controls (558.87,137.17) and (556.87,167.17) .. (538.87,178.17) .. controls (520.87,189.17) and (515.87,181.17) .. (490.87,177.17) .. controls (465.87,173.17) and (451.87,180.17) .. (428.87,184.17) .. controls (405.87,188.17) and (383.62,184.03) .. (368.87,175.17) .. controls (354.11,166.3) and (345.87,136.17) .. (369.67,121.67) -- cycle ;
			\draw [line width=0.75]    (391.87,141.07) .. controls (411.56,163) and (435.19,163) .. (455.87,145.84) ;
			\draw [line width=0.75]    (402.5,146.31) .. controls (411.56,136.3) and (433.22,136.3) .. (445.04,148.7) ;
			\draw [line width=0.75]    (514.87,89.17) .. controls (534.87,105.17) and (531.87,139.33) .. (522.53,152.67) ;
			\draw [line width=0.75]    (519.87,102.17) .. controls (510.87,106.17) and (505.87,134) .. (523.2,143.33) ;
			\draw [color={rgb, 255:red, 74; green, 144; blue, 226 }  ,draw opacity=1 ][line width=0.75]    (386.53,164.67) .. controls (403.87,176.67) and (475.2,170.67) .. (499.2,150) ;
			\draw [shift={(449.35,168.37)}, rotate = 171.81] [fill={rgb, 255:red, 74; green, 144; blue, 226 }  ,fill opacity=1 ][line width=0.08]  [draw opacity=0] (10.72,-5.15) -- (0,0) -- (10.72,5.15) -- (7.12,0) -- cycle    ;
			\draw [color={rgb, 255:red, 74; green, 144; blue, 226 }  ,draw opacity=1 ][line width=0.75]    (505.87,144.67) .. controls (512.53,141.33) and (512.53,128) .. (509.87,122.67) ;
			\draw [color={rgb, 255:red, 74; green, 144; blue, 226 }  ,draw opacity=1 ][line width=0.75]  [dash pattern={on 0.84pt off 2.51pt}]  (425.87,113.33) .. controls (445.2,120) and (457,134.06) .. (473.87,136) .. controls (490.73,137.94) and (500.18,118.94) .. (509.87,122.67) ;
			\draw [color={rgb, 255:red, 74; green, 144; blue, 226 }  ,draw opacity=1 ][line width=0.75]    (376.53,158.67) .. controls (367.2,152) and (367.87,136) .. (375.87,127.33) .. controls (383.87,118.67) and (408.68,107.74) .. (425.87,113.33) ;
			\draw  [draw opacity=0][line width=0.75]  (537.87,121.67) .. controls (537.87,145.14) and (528.91,164.17) .. (517.87,164.17) .. controls (506.82,164.17) and (497.87,145.14) .. (497.87,121.67) .. controls (497.87,98.19) and (506.82,79.17) .. (517.87,79.17) .. controls (528.86,79.17) and (537.78,98.01) .. (537.87,121.32) -- (517.87,121.67) -- cycle ; \draw [color={rgb, 255:red, 208; green, 2; blue, 27 }  ,draw opacity=1 ][line width=0.75]    (537.87,121.67) .. controls (537.87,145.14) and (528.91,164.17) .. (517.87,164.17) .. controls (506.82,164.17) and (497.87,145.14) .. (497.87,121.67) .. controls (497.87,98.19) and (506.82,79.17) .. (517.87,79.17) .. controls (528.42,79.17) and (537.06,96.53) .. (537.81,118.54) ; \draw [shift={(537.87,121.32)}, rotate = 261.83] [fill={rgb, 255:red, 208; green, 2; blue, 27 }  ,fill opacity=1 ][line width=0.08]  [draw opacity=0] (10.72,-5.15) -- (0,0) -- (10.72,5.15) -- (7.12,0) -- cycle    ; 
			\draw [color={rgb, 255:red, 144; green, 19; blue, 254 }  ,draw opacity=1 ][line width=0.75]    (363.87,127.33) .. controls (354.53,138.67) and (364.53,168.67) .. (380.53,162) .. controls (396.53,155.33) and (410.61,160.19) .. (421.9,162.47) .. controls (433.19,164.75) and (475.87,158) .. (484.53,144) .. controls (493.2,130) and (489.95,106.42) .. (497.87,83.33) .. controls (505.78,60.24) and (522.53,61.33) .. (529.87,66) .. controls (537.2,70.67) and (548.53,85.33) .. (551.2,110) .. controls (553.87,134.67) and (542.53,165.47) .. (529.2,162.13) ;
			\draw [color={rgb, 255:red, 144; green, 19; blue, 254 }  ,draw opacity=1 ]   (517.2,138.8) .. controls (519.87,146.13) and (513.87,152.13) .. (522.53,158.8) ;
			
			\draw (117,143.33) node [anchor=north west][inner sep=0.75pt]  [font=\small,color={rgb, 255:red, 74; green, 144; blue, 226 }  ,opacity=1 ] [align=left] {$\displaystyle \mathbf{x}$};
			\draw (285,113.33) node [anchor=north west][inner sep=0.75pt]  [font=\small,color={rgb, 255:red, 208; green, 2; blue, 27 }  ,opacity=1 ] [align=left] {$\displaystyle \mathbf{y}$};
			\draw (490.33,156.67) node [anchor=north west][inner sep=0.75pt]  [font=\small,color={rgb, 255:red, 74; green, 144; blue, 226 }  ,opacity=1 ] [align=left] {$\displaystyle \mathbf{x}$};
			\draw (518.33,164) node [anchor=north west][inner sep=0.75pt]  [font=\small,color={rgb, 255:red, 208; green, 2; blue, 27 }  ,opacity=1 ] [align=left] {$\displaystyle \mathbf{y}$};
			\draw (565,108.67) node [anchor=north west][inner sep=0.75pt]  [font=\small,color={rgb, 255:red, 144; green, 19; blue, 254 }  ,opacity=1 ] [align=left] {$\displaystyle \mathbf{x} +\mathbf{y}$};

		\end{tikzpicture}
			\caption{$P_{\x} \cdot P_\y$ (left) and $P_\y \cdot P_{\x+\y} \cdot P_\x $ (right) on $(T-D) \times I$}
			\label{subfig: skeins on T-D}
		\end{subfigure}
		\begin{subfigure}{1 \textwidth} 
			\centering

		\tikzset{every picture/.style={line width=0.75pt}} 
		
		\begin{tikzpicture}[x=0.75pt,y=0.75pt,yscale=-1,xscale=1]
			
			\draw [color={rgb, 255:red, 74; green, 144; blue, 226 }  ,draw opacity=1 ] [dash pattern={on 0.84pt off 2.51pt}]  (67.2,115.07) .. controls (66.53,132.4) and (84.53,145.07) .. (101.2,144.4) .. controls (117.87,143.73) and (142.53,149.07) .. (157.87,147.73) .. controls (173.2,146.4) and (215.74,144.35) .. (197.2,127.87) ;
			\draw   (80.53,83.87) .. controls (99.2,69.2) and (155.38,86.13) .. (173.87,85.87) .. controls (192.35,85.61) and (231.47,68.66) .. (249.87,79.87) .. controls (268.27,91.07) and (287.87,119.87) .. (259.2,153.2) .. controls (230.53,186.53) and (203.64,155.74) .. (165.2,154.53) .. controls (126.76,153.33) and (91.2,159.87) .. (76.53,146.53) .. controls (61.87,133.2) and (61.87,98.53) .. (80.53,83.87) -- cycle ;
			\draw [line width=0.75]    (95.2,117.43) .. controls (112.64,135.95) and (133.56,135.95) .. (151.87,121.45) ;
			\draw [line width=0.75]    (104.62,121.86) .. controls (112.64,113.4) and (131.82,113.4) .. (142.28,123.87) ;
			\draw [line width=0.75]    (184.53,118.09) .. controls (201.97,136.61) and (222.89,136.61) .. (241.2,122.12) ;
			\draw [line width=0.75]    (193.95,122.52) .. controls (201.97,114.07) and (221.15,114.07) .. (231.61,124.54) ;
			\draw  [draw opacity=0][line width=0.75]  (181.63,113.58) .. controls (187.75,108.59) and (198.87,105.25) .. (211.57,105.25) .. controls (230.88,105.25) and (246.53,112.96) .. (246.53,122.47) .. controls (246.53,131.98) and (230.88,139.69) .. (211.57,139.69) .. controls (192.27,139.69) and (176.62,131.98) .. (176.62,122.47) .. controls (176.62,120.99) and (176.99,119.55) .. (177.71,118.18) -- (211.57,122.47) -- cycle ; \draw [color={rgb, 255:red, 208; green, 2; blue, 27 }  ,draw opacity=1 ][line width=0.75]    (184.15,111.79) .. controls (190.55,107.81) and (200.46,105.25) .. (211.57,105.25) .. controls (230.88,105.25) and (246.53,112.96) .. (246.53,122.47) .. controls (246.53,131.98) and (230.88,139.69) .. (211.57,139.69) .. controls (192.27,139.69) and (176.62,131.98) .. (176.62,122.47) .. controls (176.62,120.99) and (176.99,119.55) .. (177.71,118.18) ;  \draw [shift={(181.63,113.58)}, rotate = 333.54] [fill={rgb, 255:red, 208; green, 2; blue, 27 }  ,fill opacity=1 ][line width=0.08]  [draw opacity=0] (10.72,-5.15) -- (0,0) -- (10.72,5.15) -- (7.12,0) -- cycle    ;
			\draw [color={rgb, 255:red, 74; green, 144; blue, 226 }  ,draw opacity=1 ][line width=0.75]    (189.87,122.53) .. controls (123.2,73.07) and (72.53,95.07) .. (67.2,115.07) ;
			\draw [shift={(122.41,93.07)}, rotate = 8.36] [fill={rgb, 255:red, 74; green, 144; blue, 226 }  ,fill opacity=1 ][line width=0.08]  [draw opacity=0] (10.72,-5.15) -- (0,0) -- (10.72,5.15) -- (7.12,0) -- cycle    ;
			\draw [color={rgb, 255:red, 74; green, 144; blue, 226 }  ,draw opacity=1 ] [dash pattern={on 0.84pt off 2.51pt}]  (373.33,157.07) .. controls (400.13,161.47) and (408.53,151.87) .. (421.33,150.67) .. controls (434.13,149.47) and (450.22,154.88) .. (464.53,153.47) .. controls (478.85,152.05) and (490.04,143.27) .. (484.53,133.92) ;
			\draw   (369.03,90.87) .. controls (387.7,76.2) and (443.88,93.13) .. (462.37,92.87) .. controls (480.85,92.61) and (519.97,75.66) .. (538.37,86.87) .. controls (556.77,98.07) and (576.37,126.87) .. (547.7,160.2) .. controls (519.03,193.53) and (492.14,162.74) .. (453.7,161.53) .. controls (415.26,160.33) and (379.7,166.87) .. (365.03,153.53) .. controls (350.37,140.2) and (350.37,105.53) .. (369.03,90.87) -- cycle ;
			\draw [line width=0.75]    (383.7,124.43) .. controls (401.14,142.95) and (422.06,142.95) .. (440.37,128.45) ;
			\draw [line width=0.75]    (393.12,128.86) .. controls (401.14,120.4) and (420.32,120.4) .. (430.78,130.87) ;
			\draw [line width=0.75]    (478.53,129.42) .. controls (488.03,140.42) and (510.03,141.42) .. (525.03,130.42) ;
			\draw [line width=0.75]    (482.45,129.52) .. controls (490.47,121.07) and (509.65,121.07) .. (520.11,131.54) ;
			\draw  [draw opacity=0][line width=0.75]  (500.38,112.2) .. controls (519.54,112.28) and (535.03,119.97) .. (535.03,129.44) .. controls (535.03,138.97) and (519.38,146.69) .. (500.07,146.69) .. controls (480.77,146.69) and (465.12,138.97) .. (465.12,129.44) .. controls (465.12,119.92) and (480.77,112.2) .. (500.07,112.2) -- (500.07,129.44) -- cycle ; \draw [color={rgb, 255:red, 208; green, 2; blue, 27 }  ,draw opacity=1 ][line width=0.75]    (500.38,112.2) .. controls (519.54,112.28) and (535.03,119.97) .. (535.03,129.44) .. controls (535.03,138.97) and (519.38,146.69) .. (500.07,146.69) .. controls (480.77,146.69) and (465.12,138.97) .. (465.12,129.44) .. controls (465.12,119.74) and (481.35,111.92) .. (501.15,112.21) ; \draw [shift={(498.34,112.22)}, rotate = 360] [fill={rgb, 255:red, 208; green, 2; blue, 27 }  ,fill opacity=1 ][line width=0.08]  [draw opacity=0] (10.72,-5.15) -- (0,0) -- (10.72,5.15) -- (7.12,0) -- cycle    ; 
			\draw [color={rgb, 255:red, 74; green, 144; blue, 226 }  ,draw opacity=1 ][line width=0.75]    (464.53,118.92) .. controls (437.63,98.02) and (369.3,98.62) .. (367.47,129.9) ;
			\draw [shift={(406.6,104.84)}, rotate = 357.64] [fill={rgb, 255:red, 74; green, 144; blue, 226 }  ,fill opacity=1 ][line width=0.08]  [draw opacity=0] (10.72,-5.15) -- (0,0) -- (10.72,5.15) -- (7.12,0) -- cycle    ;
			\draw [color={rgb, 255:red, 74; green, 144; blue, 226 }  ,draw opacity=1 ]   (470.03,124.92) .. controls (476.03,129.42) and (478.53,134.42) .. (484.53,133.92) ;
			\draw [color={rgb, 255:red, 74; green, 144; blue, 226 }  ,draw opacity=1 ]   (366.63,138.02) .. controls (366.13,149.9) and (370.13,158.02) .. (373.33,157.07) ;
			\draw [color={rgb, 255:red, 144; green, 19; blue, 254 }  ,draw opacity=1 ]   (535.13,137.57) .. controls (556.8,135.57) and (551.47,99.23) .. (536.8,95.57) .. controls (522.13,91.9) and (506.13,99.23) .. (496.13,100.9) .. controls (486.13,102.57) and (455.44,100.34) .. (443.13,99.57) .. controls (430.83,98.79) and (390.8,90.23) .. (375.8,98.23) .. controls (360.8,106.23) and (348.33,130.84) .. (382.33,143.59) ;
			\draw [color={rgb, 255:red, 144; green, 19; blue, 254 }  ,draw opacity=1 ]   (517.33,135.14) .. controls (522.45,137.13) and (525.33,132.89) .. (529.58,134.89) ;
			\draw [color={rgb, 255:red, 144; green, 19; blue, 254 }  ,draw opacity=1 ]   (383.19,144.16) .. controls (396.05,152.84) and (389.73,158.66) .. (408.33,161.84) ;
			\draw [shift={(384.1,144.79)}, rotate = 212.74] [fill={rgb, 255:red, 144; green, 19; blue, 254 }  ,fill opacity=1 ][line width=0.08]  [draw opacity=0] (10.72,-5.15) -- (0,0) -- (10.72,5.15) -- (7.12,0) -- cycle    ;
			\draw [color={rgb, 255:red, 144; green, 19; blue, 254 }  ,draw opacity=1 ] [dash pattern={on 0.84pt off 2.51pt}]  (408.33,161.84) .. controls (421.83,160.09) and (446.08,153.84) .. (463.08,158.34) .. controls (480.08,162.84) and (499.08,164.59) .. (505.08,158.34) .. controls (511.08,152.09) and (504.58,140.59) .. (517.33,135.14) ;
			
			\draw (80,154.6) node [anchor=north west][inner sep=0.75pt]  [font=\small,color={rgb, 255:red, 74; green, 144; blue, 226 }  ,opacity=1 ] [align=left] {$\displaystyle b_{1} +a_{1} +b_{2}$};
			\draw (234.17,134.67) node [anchor=north west][inner sep=0.75pt]  [font=\small,color={rgb, 255:red, 208; green, 2; blue, 27 }  ,opacity=1 ] [align=left] {$\displaystyle a_{2}$};
			\draw (310.5,163.6) node [anchor=north west][inner sep=0.75pt]  [font=\small,color={rgb, 255:red, 74; green, 144; blue, 226 }  ,opacity=1 ] [align=left] {$\displaystyle b_{1} +a_{1} +b_{2}$};
			\draw (522.67,141.67) node [anchor=north west][inner sep=0.75pt]  [font=\small,color={rgb, 255:red, 208; green, 2; blue, 27 }  ,opacity=1 ] [align=left] {$\displaystyle a_{2}$};
			\draw (395.17,62.93) node [anchor=north west][inner sep=0.75pt]  [font=\small,color={rgb, 255:red, 144; green, 19; blue, 254 }  ,opacity=1 ] [align=left] {$\displaystyle b_{1} +a_{1} +b_{2} +a_{2}$};

		\end{tikzpicture}
		
			\caption{$P_{b_1 + a_1 + b_2} \cdot P_{a_2} \cdot [\emptyset]$ (right) and $ P_{a_2} \cdot P_{b_1 + a_1 +b_2 + a_2} \cdot P_{b_1 + a_1 + b_2} \cdot [\emptyset]$ (left) on $\cH_2$}
			\label{subfig: skeins on H_2}
		\end{subfigure}
		\caption{}
		\label{fig: loops}
	\end{figure}

	Thus we only need to prove
	$$
	Q_{b_1 + a_1 + b_2} \cdot Q_{a_2} \cdot [\emptyset] = Q_{a_2} \cdot Q_{b_1 + a_1 +b_2 + a_2} \cdot Q_{b_1 + a_1 + b_2} \cdot [\emptyset],
	$$
	which, after a Dehn twist with respect to $b_1$, is equivalent to :
	$$
	Q_{a_1 + b_2} \cdot Q_{a_2} \cdot [\emptyset] = Q_{a_2} \cdot Q_{a_1 +b_2 + a_2} \cdot Q_{a_1 + b_2} \cdot [\emptyset].
	$$
	
	Take $A =  Q_{a_2}^{-1} \cdot Q_{a_1 + b_2}^{-1} \cdot  Q_{a_2} \cdot Q_{a_1 +b_2 + a_2} \cdot Q_{a_1 + b_2} \in \widehat{\Sk}(\Sigma_2)$. We will show that 
	\begin{align}
		\Ad_A (P_{-b_1} ) &= P_{-b_1}, \quad \text{and} \label{eq: mutation 1}\\
		\Ad_A (P_{b_3}) &= P_{b_3}. \label{eq: mutation 3}
	\end{align}
	Recall that the minus of a loop refers to the same loop but with the opposite direction.
	We use Proposition \ref{prop: Ad action} to do the computations:
	\begin{align*}
		P_{-b_1}  &\xmapsto{\Ad_{Q_{a_1 + b_2}} } P_{-b_1} + P_{-b_1 + a_1 + b_2}\\
		&\xmapsto{\Ad_{Q_{a_1 + b_2 + a_2}}} P_{-b_1 } + P_{-b_1 + a_1 + b_2 + a_2} + P_{-b_1 + a_1 + b_2}\\
		&\xmapsto{\Ad_{Q_{a_2}}}   P_{-b_1} + P_{-b_1 + a_1 + b_2} \\
		&\xmapsto{\Ad_{Q_{a_1 + b_2}^{-1}}} P_{-b_1}\\
		&\xmapsto{\Ad_{Q_{a_2}^{-1}} } P_{- b_1}
	\end{align*}
	The second line is because that we can isotope the loops $-b_1 + a_1 + b_2$ and ${a_1 + b_2 + a_2}$ so that they do not intersect (Figure \ref{fig: nonintersecting} left). Thus $\Ad_{Q_{a_1 + b_2 + a_2}} P_{-b_1 + a_1 + b_2}= P_{-b_1 + a_1 + b_2}$.
	
	\begin{figure}[htbp]

	\tikzset{every picture/.style={line width=0.75pt}} 
	
	\begin{tikzpicture}[x=0.75pt,y=0.75pt,yscale=-1,xscale=1]
		
		\draw   (156.53,69.6) .. controls (181.2,62.27) and (170,77.33) .. (231.87,75.6) .. controls (293.73,73.87) and (317.87,65.6) .. (333.87,82.93) .. controls (349.87,100.27) and (327.2,136.93) .. (303.2,143.6) .. controls (279.2,150.27) and (253.89,139.36) .. (225.87,140.93) .. controls (197.84,142.51) and (161.2,156.27) .. (141.2,133.6) .. controls (121.2,110.93) and (131.87,76.93) .. (156.53,69.6) -- cycle ;
		\draw    (167.87,108.27) .. controls (173.87,116.93) and (187.87,122.27) .. (206.53,110.93) ;
		\draw    (175.2,109.6) .. controls (182.53,100.93) and (192.53,102.27) .. (197.87,111.6) ;
		\draw    (247.07,106.93) .. controls (252.23,114.39) and (267.73,117.6) .. (282.4,107.6) ;
		\draw    (251.87,105.6) .. controls (259.2,96.93) and (269.2,98.27) .. (274.53,107.6) ;
		\draw [color={rgb, 255:red, 208; green, 2; blue, 27 }  ,draw opacity=1 ]   (197.2,116.27) .. controls (189.2,125.6) and (173.2,129.6) .. (165.2,120.27) .. controls (157.2,110.93) and (155.44,94.12) .. (173.2,88.27) .. controls (190.96,82.41) and (259.87,85.6) .. (275.87,90.93) .. controls (291.87,96.27) and (299.55,102.13) .. (300.53,110.27) .. controls (301.52,118.4) and (285.33,131.6) .. (273.87,129.6) .. controls (262.4,127.6) and (262.4,115.6) .. (250.53,110.93) ;
		\draw [color={rgb, 255:red, 208; green, 2; blue, 27 }  ,draw opacity=1 ] [dash pattern={on 0.84pt off 2.51pt}]  (197.2,116.27) .. controls (200.3,113.94) and (207.2,110.27) .. (213.87,112.93) .. controls (220.53,115.6) and (227.2,116.93) .. (233.87,114.27) .. controls (240.53,111.6) and (246.06,107.05) .. (250.53,110.93) ;
		\draw [color={rgb, 255:red, 208; green, 2; blue, 27 }  ,draw opacity=1 ]   (219.2,85.6) -- (226.53,82.27) ;
		\draw [color={rgb, 255:red, 208; green, 2; blue, 27 }  ,draw opacity=1 ]   (226.53,89.6) -- (219.2,85.6) ;
		\draw [color={rgb, 255:red, 74; green, 144; blue, 226 }  ,draw opacity=1 ]   (257.2,100.27) .. controls (255.41,93.47) and (244.53,93.6) .. (239.2,93.6) .. controls (233.87,93.6) and (225.87,94.27) .. (219.2,91.6) .. controls (212.53,88.93) and (203.87,89.6) .. (196.53,92.27) .. controls (189.97,94.65) and (186.51,98.45) .. (182.6,102.26) ;
		\draw [shift={(181.2,103.6)}, rotate = 316.97] [color={rgb, 255:red, 74; green, 144; blue, 226 }  ,draw opacity=1 ][line width=0.75]    (10.93,-3.29) .. controls (6.95,-1.4) and (3.31,-0.3) .. (0,0) .. controls (3.31,0.3) and (6.95,1.4) .. (10.93,3.29)   ;
		\draw [color={rgb, 255:red, 74; green, 144; blue, 226 }  ,draw opacity=1 ] [dash pattern={on 0.84pt off 2.51pt}]  (181.2,103.6) .. controls (173.87,113.6) and (167.2,119.6) .. (155.87,120.93) .. controls (144.53,122.27) and (137.87,122.67) .. (131.87,114.93) ;
		\draw [color={rgb, 255:red, 74; green, 144; blue, 226 }  ,draw opacity=1 ]   (131.87,114.93) .. controls (130.17,106.21) and (137.22,102.37) .. (143.2,104.67) .. controls (149.18,106.96) and (155.78,124.92) .. (167.2,132.93) .. controls (178.62,140.95) and (191.56,137.49) .. (205.2,131.6) .. controls (218.84,125.71) and (226.53,119.6) .. (225.2,108.27) .. controls (223.87,96.93) and (194.53,93.6) .. (193.87,105.6) ;
		\draw [color={rgb, 255:red, 74; green, 144; blue, 226 }  ,draw opacity=1 ] [dash pattern={on 0.84pt off 2.51pt}]  (193.87,105.6) .. controls (195.63,110.65) and (209.7,109.27) .. (215.8,108.05) .. controls (221.9,106.83) and (226.1,103.15) .. (233.2,101.6) .. controls (240.3,100.05) and (248.8,107.05) .. (257.2,100.27) ;
		\draw   (428.53,70.93) .. controls (453.2,63.6) and (442,78.67) .. (503.87,76.93) .. controls (565.73,75.2) and (589.87,66.93) .. (605.87,84.27) .. controls (621.87,101.6) and (599.2,138.27) .. (575.2,144.93) .. controls (551.2,151.6) and (525.89,140.69) .. (497.87,142.27) .. controls (469.84,143.84) and (433.2,157.6) .. (413.2,134.93) .. controls (393.2,112.27) and (403.87,78.27) .. (428.53,70.93) -- cycle ;
		\draw    (439.87,109.6) .. controls (445.87,118.27) and (459.87,123.6) .. (478.53,112.27) ;
		\draw    (447.2,110.93) .. controls (454.53,102.27) and (464.53,103.6) .. (469.87,112.93) ;
		\draw    (516.37,109.1) .. controls (522.37,117.77) and (536.37,123.1) .. (555.03,111.77) ;
		\draw    (523.03,110.43) .. controls (530.37,101.77) and (540.37,103.1) .. (545.7,112.43) ;
		\draw [color={rgb, 255:red, 208; green, 2; blue, 27 }  ,draw opacity=1 ]   (459.87,118.27) .. controls (447.7,120.05) and (443.7,133.55) .. (429.2,126.05) .. controls (414.7,118.55) and (418.1,96.12) .. (435.87,90.27) .. controls (453.63,84.41) and (532.53,87.6) .. (551.2,89.6) .. controls (569.87,91.6) and (583.03,88.88) .. (593.7,93.55) .. controls (604.37,98.22) and (609.2,106.93) .. (607.2,116.27) ;
		\draw [color={rgb, 255:red, 208; green, 2; blue, 27 }  ,draw opacity=1 ] [dash pattern={on 0.84pt off 2.51pt}]  (459.87,118.27) .. controls (471.2,121.6) and (466.7,130.1) .. (482.7,133.55) .. controls (498.7,137) and (516.37,130.38) .. (529.7,131.05) .. controls (543.03,131.72) and (565.87,136.55) .. (577.2,134.55) .. controls (588.53,132.55) and (599.2,126.27) .. (607.2,116.27) ;
		\draw [color={rgb, 255:red, 208; green, 2; blue, 27 }  ,draw opacity=1 ]   (491.2,86.93) -- (498.53,83.6) ;
		\draw [color={rgb, 255:red, 208; green, 2; blue, 27 }  ,draw opacity=1 ]   (498.53,90.93) -- (491.2,86.93) ;
		\draw  [color={rgb, 255:red, 74; green, 144; blue, 226 }  ,draw opacity=1 ] (500.37,111.13) .. controls (500.37,102.68) and (515.62,95.83) .. (534.43,95.83) .. controls (553.25,95.83) and (568.5,102.68) .. (568.5,111.13) .. controls (568.5,119.58) and (553.25,126.43) .. (534.43,126.43) .. controls (515.62,126.43) and (500.37,119.58) .. (500.37,111.13) -- cycle ;
		\draw [color={rgb, 255:red, 74; green, 144; blue, 226 }  ,draw opacity=1 ]   (503.7,104.43) -- (507.7,97.77) ;
		\draw [color={rgb, 255:red, 74; green, 144; blue, 226 }  ,draw opacity=1 ]   (503.7,104.43) -- (511.7,104.43) ;
		
		\draw (130.67,145.33) node [anchor=north west][inner sep=0.75pt]  [font=\small,color={rgb, 255:red, 74; green, 144; blue, 226 }  ,opacity=1 ] [align=left] {$\displaystyle -b_{1} +a_{1} +b_{2}$};
		\draw (252,148) node [anchor=north west][inner sep=0.75pt]  [font=\small,color={rgb, 255:red, 208; green, 2; blue, 27 }  ,opacity=1 ] [align=left] {$\displaystyle a_{1} +b_{2} +a_{2}$};
		\draw (406.8,148.67) node [anchor=north west][inner sep=0.75pt]  [font=\small,color={rgb, 255:red, 208; green, 2; blue, 27 }  ,opacity=1 ] [align=left] {$\displaystyle a_{1} +b_{2} +a_{2} +b_{3}$};
		\draw (572.8,105.67) node [anchor=north west][inner sep=0.75pt]  [font=\small,color={rgb, 255:red, 74; green, 144; blue, 226 }  ,opacity=1 ] [align=left] {$\displaystyle a_{2}$};

	\end{tikzpicture}
	
		\caption{Some loops on $\Sigma_2$.}
		\label{fig: nonintersecting}		
	\end{figure}
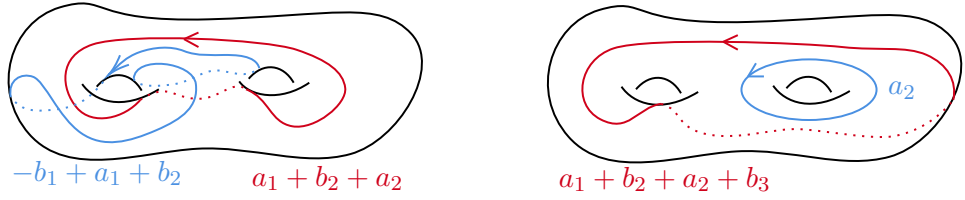

	Similarly, we have:
	\begin{align*}
		P_{b_3} 
		&\xmapsto{\Ad_{Q_{a_1 + b_2}} } P_{b_3}\\
		&\xmapsto{\Ad_{Q_{a_1 + b_2 + a_2}}} P_{b_3 } + P_{a_1 + b_2 + a_2 + b_3}\\
		&\xmapsto{\Ad_{Q_{a_2}}} P_{b_3} + P_{a_2 + b_3}   + P_{a_1 + b_2 + a_2 + b_3} \\
		&\xmapsto{\Ad_{Q_{a_1 + b_2}^{-1}}} P_{b_3} + P_{a_2 + b_3}\\
		&\xmapsto{\Ad_{Q_{a_2}^{-1}} } P_{b_3}
	\end{align*}
	where the third line is because that we can isotope $a_2$ and $a_1 + b_2 + a_2 + b_3$ so that they do not intersect (Figure \ref{fig: nonintersecting} right). Hence \eqref{eq: mutation 1} and \eqref{eq: mutation 3} are proved.

    Since \(b_1\) bounds a disk, combining \eqref{eq: mutation 1} we know that
    \[
     \Ad_A (P_{-b_1} - \bigcirc)\cdot [\emptyset]  = (P_{-b_1} - \bigcirc ) \cdot [\emptyset] = 0,
    \]
    multiplying \(A^{-1}\) on both sides will result in:
    \[
		(P_{-b_{1}} - \bigcirc) \cdot A^{-1} \cdot [\emptyset] = 0.
    \]

    Similarly by \eqref{eq: mutation 3} we have : 
    \[
     (P_{b_3} - \bigcirc) \cdot A^{-1} \cdot [\emptyset] = 0. 
    \]

	By Lemma \ref{lem: sliding}, we have that $A^{-1} \cdot [\emptyset]$ lies in the image in $\Sk(\cH_2)$ of the skein module of $\cH_2$ minus the two disks with boundary $b_1$ and $b_3$.  Since this latter space is contractible, $A^{-1} \cdot [\emptyset]$ must be a scalar. Checking the constant term forces $A^{-1} \cdot [\emptyset]$ to be $1 \cdot [\emptyset]$.

\end{proof}

\begin{spacing}{1}

	\bibliographystyle{alpha}
	\bibliography{refs.bib}

\end{spacing}
	
\end{document}